\newcommand{\functionArgument}[1]{\ifthenelse{\equal{#1}{}}  %if #1={} then print 
	%nothing, else (#1)
	{}
	{({#1})}
}
\newcommand{\wprox}[3][]{\mathrm{prox}_{#2}\functionArgument{#3}}
\newcommand{\prox}[2]{\wprox[]{#1}{#2}}
\newcommand{\wproj}[3][]{\mathrm{proj}_{#2}\functionArgument{#3}}
\newcommand{\proj}[2]{\wproj[]{#1}{#2}}
\newcommand{\domain}{D}
\newcommand{\gateaux}{G\^ateaux}
\newcommand{\lb}{\mathfrak{l}}
\newcommand{\ub}{\mathfrak{u}}
\newcommand{\Du}{\mathrm{D}}
\newtheorem{lemma}{Lemma}
\newtheorem{theorem}{Theorem}
\theoremstyle{definition}
\newtheorem{remark}{Remark}
\newtheorem{example}{Example}
\newtheorem{assumption}{Assumption}
\Crefname{assumption}{Assumption}{Assumptions}
\newcommand{\keywords}[1]
{
	{\small	
		\textbf{Key words.} {#1}
		\\
	}
}
\newcommand{\amssubject}[1]
{
	{\small
		\textbf{AMS subject classifications.} {#1}
	}
}
\renewcommand{\abstract}[1]
{
	{\small
		\textbf{Abstract.} {#1}
		\\
	}
}
\title{Criticality measure-based error estimates for 
	infinite dimensional optimization}
\author{Danlin Li\thanks{H.\ Milton Stewart School of Industrial and Systems Engineering, Georgia Institute of Technology, Atlanta, Georgia 30332 (\texttt{dli620@gatech.edu},\texttt{johannes.milz@isye.gatech.edu}).}
	\and Johannes Milz\footnotemark[1]
}
\date{February 23, 2024}
\begin{document}
	
	\maketitle
	
	\abstract{%
		Motivated by optimization
		with differential equations, we consider optimization problems
		with Hilbert spaces as decision spaces. 
		As a consequence of their
		infinite dimensionality, the numerical solution necessitates
		finite dimensional approximations and discretizations.
		We develop an approximation framework and demonstrate
		criticality measure-based error estimates. 
		We consider criticality measures inspired by those used
		within optimization methods, such as semismooth Newton
		and (conditional) gradient methods.
		Furthermore, we show that our error estimates are order-optimal.
		Our findings augment existing distance-based error estimates,
		but do not rely on  strong convexity 
		or second-order sufficient
		optimality conditions. Moreover, 
		our error estimates can be  used  for code verification
		and validation.
		We illustrate our theoretical convergence rates on 
		linear, semilinear, and bilinear PDE-constrained optimization.
	}
	\par 
	\keywords{%
		infinite dimensional optimization and decision-making,
		PDE-constrained optimization, 
		Galerkin approximation,
		finite element discretization, 
		error estimates
	}
	\par 
	\amssubject{%
		90C15, 90C60,   35Q93, 35R60, 49M25,  49N10, 65M60, 65C05
	}

	\section{Introduction}
	
	Infinite dimensional optimization is prevalent across fields such as science, engineering, operations research, and applied mathematics, where decision variables are elements of  infinite dimensional spaces. 	
	Optimization problems governed by partial differential equations (PDEs)
	\cite{Manns2022,Milz2020a},
	optimal transport \cite{Lorenz2021},
	functional regression \cite{Nickl2020}, 
	statistical inverse problems \cite{Blanchard2018}, 
	and
	topology optimization \cite{Haubner2023}
	demonstrate a breath of contemporary 
	optimization problems with infinite dimensional decisions spaces.

	For their efficient numerical solution, infinite dimensional optimization 
	typically require discretizations of decision spaces and
	approximations of objective and constraint functions. These 
	discretization and approximation schemes  
	yield finite dimensional optimization problems, aimed at approximating
	their infinite dimensional counterparts.  Critical points 
	of the finite dimensional optimization problems provide estimators
	to those of their infinite dimensional counterparts. 
	Our main contribution is the derivation of order-optimal error estimates
	for a large class of optimization problems, and three types
	of criticality measures.
	The derivation of error estimates for discretizations
	of infinite dimensional optimization problems is a classical topic. 
	While we consider our approach to quantify 
	discretization accuracy canonical, it is novel and allows for
	order-optimal error estimates for a large problem class. 
	Besides rigorously quantifying errors, our error bounds
	can be used for code verification and validation 
	of large-scale optimization codes.
	Moreover, our error estimates augment existing
	error indicators, which  provide upper bound on the distance
	between (local) solutions to the finite dimensional and
	infinite dimensional problems.

	We consider the 
	composite optimization problem
	\begin{align}
		\label{eq:intro-infiniteproblem}
		\min_{u \in U}\, \widehat{J}(u) + \psi(u)
	\end{align}
	and its discretization given by
	\begin{align}
		\label{eq:intro-discretizedproblem}
		\min_{u_h \in U_h}\, \widehat{j}_h(u_h) + \psi_h(u_h),
	\end{align}
	where $h> 0$ is a discretization parameter, 
	$U$ is a real Hilbert space,  $U_h \subset U$ is a 
	finite dimensional subspace, 
	$\widehat{J}$ and $\widehat{j}_h$ are continuously differentiable
	with $\widehat{j}_h$ approximating $\widehat{J}$.
	Moreover, $\psi$ and $\psi_h$ are convex, proper,
	and  closed with $\psi_h$ approximating $\psi$.
	
	Let $\chi$ be a criticality measure for 
	\eqref{eq:intro-infiniteproblem}, and 
	$\chi_h$ be one for \eqref{eq:intro-discretizedproblem}. 
	The manuscript's main focus is on establishing 
	order-optimal error decompositions of the form:
	for all $u_h \in U_h$ and $h > 0$,
	\begin{align}
		\label{eq:intro-error-decomposition}
		\chi(u_h)
		\leq 
		\underbrace{\chi_h(u_h) }_{\text{optimization error}}
		+ \quad \text{discretization errors
			as a function of } h.
	\end{align}
	The decomposition \eqref{eq:intro-error-decomposition} 
	relates approximate criticality of the finite dimensional 
	problem to approximate criticality
	of the infinite dimensional discretization. 
	The ``optimization error'' is a quantity 
	to be  controlled 
	via termination tolerances of numerical optimization methods.
	The ``discretization error'' is an error contribution  depending
	on the approximation schemes used to formulate
	\eqref{eq:intro-discretizedproblem}
	and the infinite dimensional problem's properties.

	We derive error estimates for three different criticality measures
	and demonstrate optimality of our estimates.
	Let $\tau > 0$ be a parameter.
	We consider the 
	the normal map-based criticality measures
	\cite{Mannel2020,Milzarek2023,Robinson1992}
	\begin{align*}
		\chi_{\text{nor}}(v;\tau) & \coloneqq  
		\|\tau(v-\prox{\psi/\tau}{v}) + \nabla \widehat{J}(\prox{\psi/\tau}{v})\|_U,
	\end{align*}
	canonical criticality measures
	\cite{Milzarek2023,Ulbrich2011}
	\begin{align*}
		\chi_{\text{can}}(u; \tau)
		& \coloneqq \|u-\prox{\psi/\tau}{u-(1/\tau)\nabla \widehat{J}(u)}\|_U,
	\end{align*}
	and gap functions
	\cite{Hearn1982,Kunisch2022} (or optimality functions)
	\cite{Polak1997,Royset2012a})
	\begin{align*}
		\chi_{\text{gap}}(u)
		& \coloneqq   \sup_{v \in \mathrm{dom}(\psi)} \, \big\{\,
		(\nabla \widehat{J}(u), u-v)_{U} + \psi(u)- \psi(v)
		\,\big\}.
	\end{align*}
	The canonical and normal-map
	criticality measures are employed as termination 
	criteria in 
	gradient-based schemes \cite{Azmi2023,Milzarek2023}, 
	trust-region methods \cite{Baraldi2023,Ouyang2021}, 
	and semismooth Newton methods \cite{Mannel2020,Ulbrich2011}	
	while the gap function may be used to terminate conditional gradient
	methods.

	In the literature on numerical approximations of 
	optimization governed by differential equations, the focus is on studying
	the distance between approximate (local) solutions to those
	of the infinite dimensional problem
	\cite{Alt2012,Casas2012a,Casas2022,Dontchev2000,Dontchev1981,Ackooij2019,%
		Ackooij2019,Martens2023,Winkler2020}. 
	The distance-based errors quantify accuracy of (local) solutions.
	This amounts to establishing bounds on	the distance-based error measure
	\begin{align}
		\label{eq:intro-proximity}
		\|u_h^* - u^*\|_{U}.
	\end{align}
	Here $u^*$ is a (local) solution to the optimal control
	problem \eqref{eq:intro-infiniteproblem}
	and $u_h^*$ is a (local)  solution to its finite dimensional approximation
	\eqref{eq:intro-discretizedproblem}. 
	Our work augments the existing literature on error estimates for 
	infinite dimensional optimization in that we derive
	error estimates for criticality measures.
	Distance-based errors typically require
	strong convexity, second-order sufficient optimality, 
	or quadratic growth conditions 
	be satisfied. 	
	However, typical real-world applications lack strong convexity,
	such as the design of renewable tidal energy farms
	\cite{Funke2016,Piggott2022}. Moreover even for academic
	model problems,
	second-order sufficient 
	optimality conditions for infinite dimensional problems 
	remain computationally intractable to verify
	\cite{Roesch2008,Wachsmuth2009}.

	\section{Notation and preliminaries}
	
	We introduce some notation, and criticality measures 
	and discuss some of their properties and relationships.
	Many of these relationships are either essentially known
	or known for special cases. 
	We denote the gradient of a function $f$ by $\nabla f$
	and its  \gateaux\ derivative by $\Du f$.
	The norm of a Banach space $X$ is denoted by
	$\|\cdot\|_X$. For a bounded domain $\domain \subset \mathbb{R}^d$, 
	$L^p(\domain)$ $(1 \leq p \leq \infty)$ denote the standard
	Lebesgue spaces.
	We denote by $H^1(\domain)$ the space of weakly
	differentiable functions,
	and by $H_0^1(\domain)$
	the space of weakly differentiable functions with 
	zero boundary traces. We define the seminorm
	$|\cdot|_{H^1(\domain)} \coloneqq \|\nabla \cdot \|_{L^2(\domain)^d}$
	on $H^1(\domain)$ and equip
	$H^1(\domain)$ with the norm 
	$\|\cdot\|_{H^1(\domain)} \coloneqq 
	\sqrt{|\cdot|_{H^1(\domain)}^2+\|\cdot\|_{L^2(\domain)}^2}$.
	We equip $H_0^1(\domain)$ with 
	$|\cdot|_{H^1(\domain)}$.
	For $a$, $b \in L^2(\domain)$, we define the $L^2(\domain)$-box
	$[a,b]$ by 
	$[a,b] \coloneqq \{\, u \in L^2(\domain) \colon \, a \leq u \leq b\,\}$.
	The dual to a Banach space $X$ is denoted by $X^*$.
	
	Let $H$ be a real Hilbert space with
	inner product $(\cdot, \cdot)_H$
	and norm $\|\cdot\|_H \coloneqq \sqrt{(\cdot, \cdot)_H}$.
	Let $\phi : H \to (-\infty, \infty]$ be proper, closed, and convex.
	Its domain $\mathrm{dom}(\phi)$
	is  given by
	$\mathrm{dom}(\phi) \coloneq \{ u \in H \colon \phi(u) < \infty\}$.
	The proximity operator $\prox{\phi}{} : H \to (-\infty,\infty]$
	of $\phi$ is defined by
	\begin{align*}
		\prox{\phi}{u} \coloneqq   \mathrm{argmin}_{v \in H}\, 
		(1/2)\|v-u\|_H^2 + \phi(v).
	\end{align*}
	We denote the indicator function of 
	a closed, convex, nonempty set $H_0 \subset H$ by $I_{H_0}$,
	and the projection  $\proj{H_0}{}$
	onto $H_0$ by $\proj{H_0}{} \coloneqq \prox{I_{H_0}}{}$.

	We consider the composite optimization problem
	\begin{align}
		\label{eq:fpsi}
		\min_{u \in H}\, f(u) + \psi(u).
	\end{align}
	
	We impose mild conditions on the problem data $H$, $f$, and $\psi$.
	
	\begin{assumption}
		\label{assumption:preliminaries}
		The space $H$ is a real Hilbert space. 
		The function
		$\psi : H \to (-\infty, \infty]$ is proper, closed, and convex.
		The set 
		$V_H \subset H$ is open  with $\mathrm{dom}(\psi) \subset V_H$
		and $f : V_H \to \mathbb{R}$ is \gateaux\ differentiable.
	\end{assumption}
	
	Let \Cref{assumption:preliminaries} hold true.
	We refer to $\bar u \in V_H$ as critical point of \eqref{eq:fpsi}
	if $\bar u \in \mathrm{dom}(\psi)$ and
	$- \nabla f(\bar u) \in \partial \psi(\bar u)$.
	If $\bar u \in V_H$ is a local solution to \eqref{eq:fpsi}, then
	$\bar u$ is a critical point of \eqref{eq:fpsi}; cf.\
	\cite[p.\ 110]{Ito2008}. For $\tau > 0$,
	we define the normal map-based criticality measure 
	$\chi_{\text{nor}}(\cdot;\tau) : V_H \to [0,\infty)$ by
	\begin{align}
		\label{eq:chinor}
		\chi_{\text{nor}}(v;\tau) \coloneqq  
		\|\tau(v-\prox{\psi/\tau}{v}) + \nabla f(\prox{\psi/\tau}{v})\|_H.
	\end{align}
	Normal maps have been introduced in \cite{Robinson1992}.
	\Cref{lem:chinor} collects a few facts about the mapping
	$\chi_{\text{nor}}(\cdot, \tau)$. 
	\begin{lemma}
		\label{lem:chinor}
		Let \Cref{assumption:preliminaries} hold.
		Fix $\tau > 0$.
		\textup{(a)}
		If $\bar v \in H$ and $\chi_{\text{nor}}(\bar v;\tau) = 0$, then
		$\bar u \coloneqq \prox{\psi/\tau}{\bar v}$ is a critical point of
		\eqref{eq:fpsi}.
		\textup{(b)} If  $\bar u \in V_H$ is a critical point of
		\eqref{eq:fpsi}, then $\bar v \coloneqq \bar u - (1/\tau) \nabla f(\bar u)$
		satisfies $\chi_{\text{nor}}(\bar v;\tau) = 0$.
	\end{lemma}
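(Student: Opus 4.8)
The plan is to reduce both statements to the variational characterization of the proximity operator. For a proper, closed, convex $\phi$ and $v \in H$, the point $p = \prox{\phi}{v}$ is by definition the unique minimizer of the strongly convex functional $q \mapsto (1/2)\|q-v\|_H^2 + \phi(q)$, so Fermat's rule for this unconstrained convex problem yields the equivalence $p = \prox{\phi}{v} \iff v - p \in \partial\phi(p)$. Applying this with $\phi = \psi/\tau$, and using $\partial(\psi/\tau) = (1/\tau)\partial\psi$, gives the identity I will use throughout: for every $v \in H$, writing $p \coloneqq \prox{\psi/\tau}{v}$, one has $\tau(v - p) \in \partial\psi(p)$; in particular $\partial\psi(p) \neq \emptyset$, so $p \in \mathrm{dom}(\psi) \subset V_H$.

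For part (a), set $\bar u \coloneqq \prox{\psi/\tau}{\bar v}$. By the identity above, $\bar u \in \mathrm{dom}(\psi) \subset V_H$ and $\tau(\bar v - \bar u) \in \partial\psi(\bar u)$. The hypothesis $\chi_{\text{nor}}(\bar v;\tau)=0$ reads $\tau(\bar v - \bar u) + \nabla f(\bar u) = 0$, that is, $\tau(\bar v - \bar u) = -\nabla f(\bar u)$. Substituting into the subdifferential inclusion gives $-\nabla f(\bar u) \in \partial\psi(\bar u)$, which together with $\bar u \in \mathrm{dom}(\psi)$ is exactly the defining condition of a critical point of \eqref{eq:fpsi}.

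For part (b), I first claim that $\prox{\psi/\tau}{\bar v} = \bar u$ for $\bar v \coloneqq \bar u - (1/\tau)\nabla f(\bar u)$. By the equivalence above it suffices to verify $\tau(\bar v - \bar u) \in \partial\psi(\bar u)$; but $\tau(\bar v - \bar u) = -\nabla f(\bar u)$, and criticality of $\bar u$ gives precisely $-\nabla f(\bar u) \in \partial\psi(\bar u)$. Since the prox is single-valued, this identifies $\prox{\psi/\tau}{\bar v} = \bar u$. Inserting this into the definition \eqref{eq:chinor} yields $\chi_{\text{nor}}(\bar v;\tau) = \|\tau(\bar v - \bar u) + \nabla f(\bar u)\|_H = \|{-\nabla f(\bar u)} + \nabla f(\bar u)\|_H = 0$.

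I expect no substantial obstacle here: both directions are bookkeeping around the single equivalence $p = \prox{\psi/\tau}{v} \iff \tau(v-p) \in \partial\psi(p)$. The only point requiring minor care is that this equivalence rests on both the sufficiency of Fermat's rule for convex problems and the single-valuedness of the prox, the latter coming from strong convexity of the prox objective; once these are recorded, part (a) is a direct substitution and part (b) follows by verifying the prox identity and cancelling the gradient terms.
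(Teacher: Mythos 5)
Your proof is correct and follows exactly the route the paper indicates: the paper does not write out the argument but cites Proposition~3.5 in Pieper (2015) and notes that the proof rests on the fundamental characterization of the proximity operator (Proposition~12.26 in Bauschke--Combettes), which is precisely your equivalence $p = \prox{\psi/\tau}{v} \iff \tau(v-p) \in \partial\psi(p)$. Your write-up simply supplies the bookkeeping the paper delegates to the references, including the correct observation that $\prox{\psi/\tau}{\bar v} \in \mathrm{dom}(\psi) \subset V_H$ so that $\nabla f$ is defined where needed.
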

	\begin{proof}
		For $V_H = H$, a proof is provided in 
		Proposition~3.5 in \cite{Pieper2015};
		see also Lemma~2.1 in \cite{Ouyang2021}
		where  $H = \mathbb{R}^n$ is considered. 
		These proofs can be adapted to our somewhat more general setting without
		any technical issues. The proof uses a fundamental characterization
		of the proximity operator provided in 
		Proposition~12.26 in \cite{Bauschke2011}.
	\end{proof}
	
	For $\tau > 0$, we consider the ``canonical'' criticality measure 
	$\chi_{\text{can}}(\cdot; \tau) : V_H \to [0,\infty)$ defined by
	\begin{align}
		\label{eq:canmeasure}
		\chi_{\text{can}}(u; \tau)
		\coloneqq  \|u-\prox{\psi/\tau}{u-(1/\tau)\nabla f(u)}\|_H.
	\end{align}
	This terminology is adapted from \cite{Milzarek2023}.
	The normal map-based criticality measure provides an upper bound
	on 	the canonical critically measure  in the following sense. 
	A similar result is provided in \cite[Lem.\ 4.1.6]{Milzarek2016}.
	\begin{lemma}
		\label{lem:chican_chinor}
		If \Cref{assumption:preliminaries} holds,
		$\tau > 0$, $v \in H$, and $u \coloneqq \prox{\psi/\tau}{v}$, then
		$\chi_{\text{can}}(u; \tau) \leq 
		(1/\tau)\chi_{\text{nor}}(v; \tau)$.
	\end{lemma}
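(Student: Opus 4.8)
The plan is to reduce the inequality to the nonexpansiveness of the proximity operator $\prox{\psi/\tau}{}$. The guiding observation is that, once $u = \prox{\psi/\tau}{v}$ is fixed, both criticality measures are governed by the \emph{same} operator evaluated at two points whose separation is exactly $(1/\tau)$ times the normal-map residual.

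First, I would substitute $u = \prox{\psi/\tau}{v}$ into the definition \eqref{eq:canmeasure} and abbreviate $w \coloneqq u - (1/\tau)\nabla f(u)$, so that
\[
\chi_{\text{can}}(u;\tau) = \|u - \prox{\psi/\tau}{w}\|_H = \|\prox{\psi/\tau}{v} - \prox{\psi/\tau}{w}\|_H .
\]
Second, I would invoke the fact that the proximity operator of a proper, closed, and convex function is (firmly) nonexpansive, hence $1$-Lipschitz (cf.\ \cite{Bauschke2011}), which yields $\chi_{\text{can}}(u;\tau) \leq \|v - w\|_H$. Third, I would expand $v - w = (v - u) + (1/\tau)\nabla f(u)$ and factor out $1/\tau$ to obtain
\[
\|v - w\|_H = \frac{1}{\tau}\,\|\tau(v-u) + \nabla f(u)\|_H = \frac{1}{\tau}\,\chi_{\text{nor}}(v;\tau),
\]
where the final equality uses $u = \prox{\psi/\tau}{v}$ together with the definition \eqref{eq:chinor}. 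Chaining these three relations produces the asserted bound.

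I do not expect a substantive obstacle: the argument is essentially a one-line application of nonexpansiveness. The only point requiring care is the bookkeeping that identifies the displacement $v - w$ between the two arguments of $\prox{\psi/\tau}{}$ with the rescaled normal-map residual $\tau(v-u) + \nabla f(u)$. It is precisely the hypothesis $u = \prox{\psi/\tau}{v}$ that makes this identification exact rather than merely approximate, and thereby pins down the constant $1/\tau$.
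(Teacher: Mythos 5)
Your proof is correct and takes essentially the same route as the paper's: both rewrite $u = \prox{\psi/\tau}{v}$ so that $\chi_{\text{can}}(u;\tau)$ becomes the distance between two proximity-operator values, invoke (firm) nonexpansiveness of $\prox{\psi/\tau}{}$, and identify the displacement $v - w = (v-u) + (1/\tau)\nabla f(u)$ with $(1/\tau)$ times the normal-map residual. The only detail the paper makes explicit that you leave implicit is the observation that $u \in \mathrm{dom}(\psi) \subset V_H$, which guarantees $\nabla f(u)$ is well defined.
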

	\begin{proof}
		We adapt computations in
		\cite[Cor.\ 6.3]{Milz2022b}.
		We have $u \in \mathrm{dom}(\psi)$. Hence
		$u \in V_H$.
		Since  $\prox{\psi/\tau}{}$ is firmly
		nonexpansive \cite[Prop.\ 12.28]{Bauschke2011}, we obtain
		\begin{align*}
			\chi_{\text{can}}(u; \tau)
			&= \|\prox{\psi/\tau}{v}-\prox{\psi/\tau}{u-(1/\tau)\nabla f(u)}\|_H
			\leq \|v-u+(1/\tau)\nabla f(u)\|_H
			\\
			& = (1/\tau) \|\tau(v-u)+\nabla f(u)\|_H
			= (1/\tau)\chi_{\text{nor}}(v; \tau).
		\end{align*}
	\end{proof}
	
	We define
	the  (regularized) 
	gap function
	$\chi_{\text{rgap}}(\cdot;\tau) \colon V_H \to (-\infty,\infty]$ by
	\begin{align}
		\label{eq:gapfunctional}
		\chi_{\text{rgap}}(u;\tau)
		\coloneqq   \sup_{v \in \mathrm{dom}(\psi)}\, 
		\big\{
		(\nabla f(u), u-v)_{H} + \psi(u)- \psi(v) - (\tau/2) \|u-v\|_H^2
		\big\},
	\end{align}
	where $\tau \geq 0$.
	For $\tau  = 0$, \eqref{eq:gapfunctional} becomes
	the unregularized gap function
	\cite{Hearn1982,Beck2015,Kunisch2022}.

	The function $\chi_{\text{rgap}}(\cdot;\tau)$ 
	measures approximate criticality
	of a point $\bar u \in V_H$  as the following simple
	fact demonstrates.
	
	\begin{lemma}
		\label{lem:chivarbound}
		Under \Cref{assumption:preliminaries}
		Let $\tau \geq 0$, and let $\tau \geq 0$,
		\textnormal{(a)} $\chi_{\text{rgap}}(u;\tau) = \infty$
		if $u \in V_H \setminus \mathrm{dom}(\psi)$,
		\textnormal{(b)}
		$\chi_{\text{rgap}}(u;0)
		\geq \chi_{\text{rgap}}(u;\tau)\geq 0$ for all $u \in V_H$, and 
		\textnormal{(c)}
		if $\varepsilon \in [0,\infty)$ and $u \in V_H$, then
		$\chi_{\text{rgap}}(u;\tau)\leq \varepsilon$ if and only if
		$u \in \mathrm{dom}(\psi)$ and
		\begin{align*}
			(\nabla f(u), v-u)_{H} + \psi(v)- \psi(u)
			\geq -(\tau/2) \|u-v\|_H^2 -\varepsilon
			\quad \text{for all} \quad v \in H.
		\end{align*}
	\end{lemma}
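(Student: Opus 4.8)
The plan is to prove the three claims directly from the definition of $\chi_{\text{rgap}}$ in \eqref{eq:gapfunctional}, with the only real care needed being the bookkeeping for the extended-real-valued $\psi$. Throughout I would exploit that $\psi$ is proper, so $\mathrm{dom}(\psi) \neq \emptyset$, and fix some $v_0 \in \mathrm{dom}(\psi)$.

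For part (a), suppose $u \in V_H \setminus \mathrm{dom}(\psi)$, so that $\psi(u) = +\infty$ while $\psi(v_0) < \infty$. Then the term of the supremum at $v_0$ equals $(\nabla f(u), u-v_0)_H + \psi(u) - \psi(v_0) - (\tau/2)\|u-v_0\|_H^2 = +\infty$, and hence the supremum is $+\infty$. For part (b), the left inequality follows from monotonicity of the supremum: since $\tau \geq 0$ gives $-(\tau/2)\|u-v\|_H^2 \leq 0$, each term of the supremum defining $\chi_{\text{rgap}}(u;\tau)$ is dominated by the corresponding term for $\tau = 0$. For the right inequality, if $u \notin \mathrm{dom}(\psi)$ then part (a) yields $\chi_{\text{rgap}}(u;\tau) = \infty \geq 0$; otherwise $u \in \mathrm{dom}(\psi)$ is admissible as the test point $v = u$, for which the term of the supremum evaluates to $0$, so $\chi_{\text{rgap}}(u;\tau) \geq 0$.

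Part (c) is the substantive claim, and the approach is a rearrangement combined with a careful passage between $v \in \mathrm{dom}(\psi)$ and $v \in H$. For the forward direction, $\chi_{\text{rgap}}(u;\tau) \leq \varepsilon < \infty$ forces $u \in \mathrm{dom}(\psi)$ by part (a); the bound $\leq \varepsilon$ on the supremum then gives, for every $v \in \mathrm{dom}(\psi)$, the inequality $(\nabla f(u), u-v)_H + \psi(u) - \psi(v) - (\tau/2)\|u-v\|_H^2 \leq \varepsilon$, which I would rearrange into the stated form $(\nabla f(u), v-u)_H + \psi(v) - \psi(u) \geq -(\tau/2)\|u-v\|_H^2 - \varepsilon$. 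The key observation is that this extends to all $v \in H$ for free: if $v \notin \mathrm{dom}(\psi)$ then $\psi(v) = +\infty$ and the inequality holds trivially. The converse direction reverses this reasoning: restricting the assumed inequality to $v \in \mathrm{dom}(\psi)$, rearranging, and taking the supremum over such $v$ returns $\chi_{\text{rgap}}(u;\tau) \leq \varepsilon$.

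I do not anticipate a genuine obstacle here; the only point to get right is the consistent handling of the $+\infty$ values of $\psi$, namely ensuring that the extension of the inequality in (c) from $\mathrm{dom}(\psi)$ to all of $H$, and the reduction in the converse, are both justified by the convention that $\psi(v) = +\infty$ off its domain renders the inequality vacuous there.
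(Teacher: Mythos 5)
Your proof is correct and follows essentially the same route as the paper's: each part is verified directly from the definition \eqref{eq:gapfunctional}, using properness of $\psi$ to pick a test point for (a), the test point $v=u$ and monotonicity in $\tau$ for (b), and rearrangement for (c). The only difference is one of detail---you make explicit the extension of the inequality in (c) from $v\in\mathrm{dom}(\psi)$ to all $v\in H$ (vacuous off the domain since $\psi(v)=+\infty$ there), a step the paper compresses into ``rearranging terms yields the assertions.''
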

	\begin{proof}
		\textnormal{(a)} Since $\mathrm{dom}(\psi)$ is nonempty, there exists
		$v \in \mathrm{dom}(\psi)$. Using \eqref{eq:gapfunctional}, 
		we obtain
		$\chi_{\text{rgap}}(u) = \infty$.
		
		\textnormal{(b)} 
		The first inequality is a direct consequence of
		\eqref{eq:gapfunctional}.
		If $u \not\in \mathrm{dom}(\psi)$, then 
		$\chi_{\text{rgap}}(u;\tau) = \infty \geq 0$.
		If $u \in \mathrm{dom}(\psi)$, then choosing
		$v = u$ in \eqref{eq:gapfunctional} shows that
		$\chi_{\text{rgap}}(u;\tau) \geq 0$.
		
		\textnormal{(d)} If 	$\chi_{\text{rgap}}(u;\tau) \leq \varepsilon$,
		then \eqref{eq:gapfunctional} 
		and the fact that $\mathrm{dom}(\psi)$
		is nonempty ensure $u \in \mathrm{dom}(\psi)$.
		Rearranging terms yields the assertions.
	\end{proof}
	
	The gap function provides an upper bound on squared
	canonical criticality measure.
	An estimate related to that in \eqref{eq:chicanchivar} is
	established in \cite[sect.\ 7.5.1]{Lan2020}.
	
	\begin{lemma}
		\label{lem:chicanchivar}
		Let $\tau > 0$, and let $\nu \geq 0$.
		If \Cref{assumption:preliminaries} holds and
		$u \in V_H$, then
		\begin{align}
			\label{eq:chicanchivar}
			(\tau-\nu/2)\chi_{\text{can}}(u; \tau)^2 \leq 
			\chi_{\text{rgap}}(u;\nu).
		\end{align}
	\end{lemma}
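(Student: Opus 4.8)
The plan is to evaluate the supremum defining $\chi_{\text{rgap}}(u;\nu)$ at the single competitor produced by the proximal step. Set $w \coloneqq \prox{\psi/\tau}{u-(1/\tau)\nabla f(u)}$, so that by \eqref{eq:canmeasure} we have $\chi_{\text{can}}(u;\tau) = \|u-w\|_H$, and note that $w$ lies in the range of the proximity operator, hence $w \in \mathrm{dom}(\psi)$, making it an admissible choice in \eqref{eq:gapfunctional}. First I would dispose of the trivial case: if $u \notin \mathrm{dom}(\psi)$, then $\psi(u) = \infty$ and \Cref{lem:chivarbound}(a) gives $\chi_{\text{rgap}}(u;\nu) = \infty$, so the claimed inequality holds automatically. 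Thus I may assume $u \in \mathrm{dom}(\psi)$, so that every term below is finite.

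The key algebraic input is the optimality characterization of the proximity operator already used in the proof of \Cref{lem:chinor}. Applying Proposition~12.26 in \cite{Bauschke2011} to the function $(1/\tau)\psi$ with argument $z \coloneqq u - (1/\tau)\nabla f(u)$ gives $z - w \in (1/\tau)\partial\psi(w)$, which after clearing the scaling factor reads $\tau(u-w) - \nabla f(u) \in \partial\psi(w)$. I would then invoke the subgradient inequality for $\psi$ at the point $u$ against this subgradient, obtaining $\psi(u) - \psi(w) \geq (\tau(u-w) - \nabla f(u),\, u-w)_H$. Rearranging the inner product isolates the central lower bound
\begin{align*}
(\nabla f(u), u-w)_H + \psi(u) - \psi(w) \geq \tau\|u-w\|_H^2.
\end{align*}

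To conclude, I would restrict the supremum in \eqref{eq:gapfunctional} to the feasible point $v = w$, which yields
\begin{align*}
\chi_{\text{rgap}}(u;\nu) \geq (\nabla f(u), u-w)_H + \psi(u) - \psi(w) - (\nu/2)\|u-w\|_H^2,
\end{align*}
and then substitute the lower bound from the previous paragraph to get $\chi_{\text{rgap}}(u;\nu) \geq \tau\|u-w\|_H^2 - (\nu/2)\|u-w\|_H^2 = (\tau-\nu/2)\chi_{\text{can}}(u;\tau)^2$, as desired. This final step is sign-agnostic, so no case distinction on whether $\tau - \nu/2$ is positive is needed.

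The substance of the argument is the observation that the subgradient furnished by the proximal optimality condition is precisely $\tau(u-w) - \nabla f(u)$, so that the gradient term contributed by the gap function cancels against the gradient term hidden in the subgradient, leaving the clean quadratic $\tau\|u-w\|_H^2$. I expect the only friction to be bookkeeping: correctly tracking the factor $1/\tau$ when passing the subdifferential through the scaling of $\psi$, and handling the $u \notin \mathrm{dom}(\psi)$ edge case; both are routine.
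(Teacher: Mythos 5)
Your proof is correct and is essentially the paper's own argument: the membership $\tau(u-w)-\nabla f(u)\in\partial\psi(w)$ tested against the point $u$ is precisely the Proposition~12.26 inequality $(u-w,v-w)_H+\psi(w)/\tau\leq\psi(u)/\tau$ that the paper invokes, and both proofs then conclude by choosing $v=w$ in the supremum defining $\chi_{\text{rgap}}(u;\nu)$. The only cosmetic differences are your subdifferential phrasing versus the paper's inner-product decomposition of $\|u-w\|_H^2$, and your explicit treatment of the case $u\notin\mathrm{dom}(\psi)$, which the paper's chain of inequalities covers implicitly since $\psi(w)$ is finite.
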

	
	\begin{proof}[{Proof of \Cref{lem:chicanchivar}}]
		We define 
		$v \coloneqq u-(1/\tau)\nabla f(u)$
		and $w \coloneqq \prox{\psi/\tau}{v}$.
		Using Proposition~12.26 in \cite{Bauschke2011}, we have
		\begin{align*}
			(	u-w, v-w)_H + \psi(w)/\tau \leq \psi(u)/\tau.
		\end{align*}
		Combined with \eqref{eq:canmeasure},
		\begin{align*}
			\chi_{\text{can}}(u; \tau)^2 = \|u-w\|_H^2 
			= (u-w, v-w)_H + (u-w, u-v)_H.
		\end{align*}
		We obtain
		\begin{align*}
			\|u-w\|_H^2
			&= (u-w, v-w)_H + (1/\tau)(u-w, \nabla f(u))_H
			\\
			& \leq  (1/\tau)\psi(u) - (1/\tau)\psi(w)
			+ (1/\tau)(\nabla f(u),u-w)_H.
		\end{align*}
		Combined with \eqref{eq:gapfunctional}, we have
		\begin{align*}
			(\tau-\nu/2)\|u-w\|_H^2
			\leq   \psi(u) - \psi(w)
			+ (\nabla f(u),u-w)_H
			- (\nu/2)\|u-w\|_H^2
			\leq \chi_{\text{rgap}}(u;\nu).
		\end{align*}
		Using \eqref{eq:canmeasure},
		we obtain \eqref{eq:chicanchivar}. 
	\end{proof}
	
	\section{Infinite dimensional
		optimization problems and their discretizations}
	
	We consider
	\begin{align}
		\label{eq:infiniteproblem}
		\min_{u \in U}\, \widehat{J}(u) + \psi(u)
	\end{align}
	and its discretization given by
	\begin{align}
		\label{eq:discretizedproblem}
		\min_{u_h \in U_h}\, \widehat{j}_h(u_h) + \psi_h(u_h).
	\end{align}
	
	We define the function $\widehat{J}$  in \eqref{eq:trueobjective}
	after formulating \Cref{assumption:trueproblem}
	and $\widehat{j}_h$ in \eqref{eq:discretizedobjective} after
	stating \Cref{assumption:discretizedproblem}.

	\begin{assumption}[{infinite dimensional optimization problem}]
		\label{assumption:trueproblem}
		~
		\begin{enumerate}[nosep]
			\item $U$ is a real, separable Hilbert space, and $Y$ 
			is real, separable Banach spaces.
			\item $\psi : U \to (-\infty,\infty]$
			is proper, closed, and convex.
			\item $J : Y \times U \to \mathbb{R}$ 
			is continuously differentiable.
			\item  $V_U \subset U$ is an open 
			set with 	$\mathrm{dom}(\psi) \subset V_U$.
			\item $S \colon V_U \to Y$ is continuously differentiable.
		\end{enumerate}
	\end{assumption}

	Let \Cref{assumption:trueproblem} hold true.
	We define $\widehat{J} : V_U \to \mathbb{R}$ by
	\begin{align}
		\label{eq:trueobjective}
		\widehat{J}(u) \coloneqq J(S(u),u).
	\end{align}
	
	We state conditions allowing us to formulate finite dimensional
	approximations to \eqref{eq:infiniteproblem}. 
	Our approach to discretize \eqref{eq:infiniteproblem} is mainly inspired
	by the text  \cite{Hintermueller2004}
	(see also
	\cite{Arnautu1998,Falk1973,Geveci1979,%
		Malanowski1982,Meidner2008,Neittaanmaki2006,%
		Grigorieff1990,Grigorieff1990a}).
	
	\begin{assumption}[{discretized optimization problem}]
		\label{assumption:discretizedproblem}
		Let $U$, $Y$, and $V_U$ be as in 
		\Cref{assumption:trueproblem}. 
		For each $h > 0$, it holds that
		\begin{enumerate}[nosep]
			\item $U_h$ and $Y_h$  are subspaces
			of $U$ and $Y$, respectively.
			\item $\psi_h : U_h \to (-\infty,\infty]$
			is proper, closed, and convex.
			\item $\mathrm{dom}(\psi_h) \subset V_U$.
			\item $S_h \colon V_U \to Y_h$ is continuously differentiable.
		\end{enumerate}
	\end{assumption}

	We define $\iota_{U_h,U}\colon U_h \to U$ by 
	$\iota_{U_h,U}u_h \coloneqq  u_h$,
	and $\iota_{Y_h,Y} : Y_h \to Y$ by
	$\iota_{Y_h,Y}y_h \coloneqq  y_h$.
	These mappings are linear, have operator norm one
	and hence are bounded. For the finite dimensional problem
	\eqref{eq:discretizedproblem}, we define
	$\widehat{j}_h : U_h \cap V_U \to \mathbb{R}$
	and  $\widehat{J}_{h} : V_U \to \mathbb{R}$ by
	\begin{align}
		\label{eq:discretizedobjective}
		\widehat{j}_h(u_h) \coloneqq \widehat{J}_h(\iota_{U_h,U}u_h),
		\quad \text{and} \quad 
		\widehat{J}_{h}(u) \coloneqq J(\iota_{Y_h,Y}S_h(u), u).
	\end{align}
	Making explicit the operator $\iota_{U_h,U}$
	in \eqref{eq:discretizedobjective} helps us computing derivatives and
	gradients.
	Fix $u_h \in U_h \cap V_U$.
	Using the chain rule, we find that
	\begin{align*}
		\mathrm{D}_{u_h}\widehat{j}_h(u_h) 
		= \iota_{U_h,U}^*\mathrm{D}_u \widehat{J}_h(\iota_{U_h,U}u_h).
	\end{align*}
	Let $R_{U_h} : U_h^* \to U_h$ 
	and $R_{U} : U^* \to U$  be Riesz mappings.
	We obtain
	\begin{align*}
		\begin{aligned}
			\nabla_{u_h} \widehat{j}_h(u_h) 
			= R_{U_h}\mathrm{D}_{u_h}\widehat{j}_h(u_h) 
			= R_{U_h}\iota_{U_h,U}^*\mathrm{D}_u 
			\widehat{J}_h(\iota_{U_h,U}u_h)
			= R_{U_h}\iota_{U_h,U}^*R_{U}^{-1} 
			\nabla_u \widehat{J}_h(\iota_{U_h,U}u_h).
		\end{aligned}
	\end{align*}
	
	Let us define $\Pi_h : U \to U_h$ by
	$\Pi_h \coloneqq R_{U_h}\iota_{U_h,U}^*R_U^{-1}$.
	We obtain the gradient formula
	\begin{align}
		\label{eq:nablajh}
		\begin{aligned}
			\nabla_{u_h} \widehat{j}_h(u_h) 
			= \Pi_h \nabla_u \widehat{J}_h(\iota_{U_h,U}u_h)
			= \Pi_h \nabla_u \widehat{J}_h(u_h).
		\end{aligned}
	\end{align}
	The operator $\Pi_h$
	is the Hilbert adjoint operator
	$\iota_{U_h,U}^\star$ of $\iota_{U_h,U}$
	\cite[p.\ 237]{Kreyszig1978}, 
	that is, $\iota_{U_h,U}^\star = \Pi_h$.
	Hence for all $v \in U$ and $w_h \in U_h$,
	\begin{align}
		\label{eq:projectionontoUh}
		(\Pi_h v, w_h)_U  = 
		(\iota_{U_h,U}^\star v, w_h)_U  
		=  (v,\iota_{U_h,U}w_h)_U
		= (v,w_h)_U.
	\end{align}
	Therefore $\Pi_h$ is the (orthogonal) projection  onto $U_h$
	and we have 
	$\|\Pi_h v\|_{U} \leq \|v\|_U$ for all $v \in U$
	and $h  > 0$ \cite[Lem.\ 9.18]{Alt2016}.

	\begin{remark}
		\Cref{assumption:trueproblem,assumption:discretizedproblem}
		allow for the modeling choice $S_h = S$
		with $Y_h = Y$.
	\end{remark}
	
	\Cref{assumption:trueproblem,assumption:discretizedproblem}
	are often satisfied for PDE-constrained optimization.

	\section{Error estimates}

	Nonasymptotic error estimates require ``higher regularity''
	\cite{Sayas2004}. We formulate ``higher regularity'' in the next 
	two assumptions.

	\begin{assumption}
		\label{assumption:gelfand-projection}
		Let $U$ be as in \Cref{assumption:trueproblem}. 
		\begin{enumerate}[nosep]
			\item The space $\widehat{U}$ is a real, reflexive Banach space, $\widehat{U} \subset U$,
			and the operator  $\iota_{\widehat{U},U} \colon \widehat{U} \to U$
			defined by  $\iota_{\widehat{U},U}w \coloneqq w$ is  continuous
			and its image is dense in $U$.
			\item For a nondecreasing function 
			$\rho_{\Pi} : [0,\infty) \to [0,\infty)$
			with $\rho_{\Pi}(0) = 0$,
			\begin{align*}
				\|\Pi_h v - v\|_U \leq \rho_{\Pi}(h)\|v\|_{\widehat{U}}
				\quad \text{for all} \quad v \in \widehat{U},
				\quad h > 0.
			\end{align*}
		\end{enumerate}
	\end{assumption}
	
	\Cref{assumption:gelfand-projection} is fulfilled
	if $U = L^2(\domain)$, $\widehat{U} = H^1(\domain)$,
	and $U_h$ is the space of piecewise constant functions
	\cite[Prop.\ 1.135]{Ern2004}
	or if $U = L^2(\domain)$, $\widehat{U} = H^2(\domain)$,
	and $U_h$ is the space of piecewise linear continuous functions
	\cite[Prop.\ 1.134]{Ern2004}, for example.
	
	Upon identifying $U^*$ with $U$ and writing $U^* = U$,
	\Cref{assumption:gelfand-projection} ensures that the triple
	$\widehat{U} \hookrightarrow U \hookrightarrow \widehat{U}^*$ is a Gelfand triple
	\cite[Def.\ 17.1]{Wloka1987}.

	If the spaces $U_h$ are finite dimensional, then 
	\Cref{assumption:gelfand-projection} implies that the embedding
	$\widehat{U} \hookrightarrow U$ is compact, that is, 
	$\iota_{\widehat{U},U}$ is a compact operator. Although not explicitly used 
	within our error estimation, this fact highlights some ``hidden''
	compactness resulting from an approximation accuracy.

	\begin{lemma}
		If \Cref{assumption:gelfand-projection} holds,
		and for each $h > 0$, $U_h$ is finite dimensional,
		then $\iota_{\widehat{U},U}$ is compact.
	\end{lemma}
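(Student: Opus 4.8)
The plan is to exhibit $\iota_{\widehat{U},U}$ as a uniform (operator-norm) limit of finite-rank operators and then invoke the classical fact that, since $U$ is complete, the compact operators from $\widehat{U}$ into $U$ form a closed subspace of the bounded operators in the operator norm. Reflexivity of $\widehat{U}$ plays no role; the two ingredients that matter are the finite dimensionality of the spaces $U_h$ and the approximation bound in \Cref{assumption:gelfand-projection}.

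First I would define, for each $h > 0$, the operator $T_h \colon \widehat{U} \to U$ by $T_h v \coloneqq \Pi_h v$, where $v$ is regarded as an element of $U$ via $\iota_{\widehat{U},U}$; that is, $T_h = \Pi_h \iota_{\widehat{U},U}$. Since $\iota_{\widehat{U},U}$ is continuous and $\Pi_h$ is the orthogonal projection onto $U_h$ with $\|\Pi_h v\|_U \le \|v\|_U$, each $T_h$ is bounded. Because the range of $T_h$ is contained in the finite dimensional subspace $U_h$, the operator $T_h$ has finite rank and is therefore compact. This is exactly the step that uses the hypothesis that each $U_h$ is finite dimensional.

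Next I would bound the distance between $T_h$ and $\iota_{\widehat{U},U}$ in the operator norm. Using $\iota_{\widehat{U},U}v = v$ together with the estimate of \Cref{assumption:gelfand-projection}, for every $v \in \widehat{U}$ one has
\begin{align*}
	\|T_h v - \iota_{\widehat{U},U}v\|_U = \|\Pi_h v - v\|_U \leq \rho_{\Pi}(h)\,\|v\|_{\widehat{U}},
\end{align*}
so that $\sup_{\|v\|_{\widehat{U}}\le 1}\|T_h v - \iota_{\widehat{U},U} v\|_U \le \rho_{\Pi}(h)$. Letting $h \downarrow 0$, the right-hand side tends to $0$, whence $T_h \to \iota_{\widehat{U},U}$ in the operator norm; since each $T_h$ is compact and the compact operators are closed in the operator norm (here completeness of $U$ enters), the limit $\iota_{\widehat{U},U}$ is compact.

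The one delicate point, and the step I would treat most carefully, is the passage $\rho_{\Pi}(h) \to 0$ as $h \downarrow 0$. A nondecreasing function with $\rho_{\Pi}(0) = 0$ has a right limit $\inf_{h>0}\rho_{\Pi}(h) \geq 0$ at the origin, and the compactness conclusion genuinely requires this infimum to vanish: taking $U = \widehat{U} = \ell^2$ with identity embedding, $U_h = \{0\}$, and $\rho_{\Pi}\equiv 1$ on $(0,\infty)$ satisfies every literal hypothesis of \Cref{assumption:gelfand-projection} while the (noncompact) identity operator is obtained as the limit. I would therefore make explicit that the approximation property is used in the form $\lim_{h\to 0^+}\rho_{\Pi}(h)=0$, which is the intended meaning of the rate function $\rho_\Pi$ and is precisely what couples the finite-rank approximants $T_h$ to $\iota_{\widehat{U},U}$.
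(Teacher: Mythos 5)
Your proof is correct and follows essentially the same route as the paper's: both approximate $\iota_{\widehat{U},U}$ in operator norm by the finite-rank (hence compact) operators $\Pi_h\iota_{\widehat{U},U}$ and conclude by closedness of the compact operators under uniform limits (the paper cites Theorems 8.1-4 and 8.1-5 in Kreyszig for exactly these two steps). Your closing observation is a genuine refinement rather than a flaw: the paper's proof tacitly uses $\rho_{\Pi}(h)\to 0$ as $h\to 0^+$, which the literal wording of \Cref{assumption:gelfand-projection} (nondecreasing with $\rho_{\Pi}(0)=0$) does not force, as your $\ell^2$ example with $U_h=\{0\}$ shows.
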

	\begin{proof}
		\Cref{assumption:gelfand-projection}  ensures
		$\|\Pi_h \iota_{\widehat{U},U} v - \iota_{\widehat{U},U} v\|_U 
		\leq \rho_{\Pi}(h)\|v\|_{\widehat{U}}$
		for all $h > 0$ and  $v \in \widehat{U}$.
		Hence $\Pi_h \iota_{\widehat{U},U} $ converges uniformly 
		to $\iota_{\widehat{U},U}$ as $h \to 0^+$. Since $U_h$ is finite dimensional,
		the linear bounded operator $\Pi_h \iota_{\widehat{U},U}$ is compact
		\cite[Thm.\ 8.1-4]{Kreyszig1978}. Hence
		$\iota_{\widehat{U},U}$ is compact  \cite[Thm.\ 8.1-5]{Kreyszig1978}.
	\end{proof}
	
	Having introduced $\widehat{U}$, we can impose ``higher regularity''
	of the gradient $\nabla \widehat{J}$.
	
	\begin{assumption}
		\label{assumption:uniformbounded}
		Let $V_U$ be as in \Cref{assumption:trueproblem},
		and let $U_h$ be as in \Cref{assumption:discretizedproblem}.
		\begin{enumerate}[nosep]
			\item For a bounded set $W_U\subset V_U$, 
			$\mathrm{dom}(\psi) \subset W_U$,
			and $\mathrm{dom}(\psi_h) \subset W_U$
			for all $h > 0$.
			\item The gradient $\nabla \widehat{J}$ is Lipschitz continuous
			on $W_U$ with Lipschitz constant $L_{\nabla \widehat{J}} > 0$.
			\item For all $h > 0$ and $v_h \in W_U \cap U_h$,
			$\nabla \widehat{J}(v_h) \in \widehat{U}$,
			and there exists $\ell_{\nabla \widehat{J}} > 0$ such that
			\begin{align}
				\label{eq:gradientbound}
				\|\nabla \widehat{J}(v_h)\|_{\widehat{U}} 
				\leq \ell_{\nabla \widehat{J}}
				\quad \text{for all} \quad v_h \in W_U \cap U_h, \quad 
				\quad h  > 0.
			\end{align}
			\item For a nondecreasing function 
			$\rho_{\nabla \widehat{J}} : [0,\infty) \to [0,\infty)$
			with $\rho_{\nabla \widehat{J}}(0) = 0$,
			\begin{align}
				\label{eq:gradienterror}
				\|\nabla \widehat{J}_{h}(v_h)- \nabla\widehat{J}(v_h)\|_U
				\leq \rho_{\nabla \widehat{J}}(h)
				\quad \text{for all} \quad v_h \in W_U \cap U_h, \quad 
				\quad h  > 0.
			\end{align}
		\end{enumerate}
	\end{assumption}

	\subsection{Normal map-based error estimates}

	We define the normal map-based criticality measure
	\begin{align}
		\label{eq:normap}
		\chi_{\text{nor}}(v;\tau) & \coloneqq  
		\|\tau(v-\prox{\psi/\tau}{v}) + \nabla \widehat{J}(\prox{\psi/\tau}{v})\|_U,
	\end{align}
	and its  approximation
	\begin{align}
		\label{eq:normaph}
		\chi_{\text{nor}, h}(v_h;\tau) & \coloneqq   
		\|\tau(v_h-\prox{\psi_h/\tau}{v_h}) + 
		\nabla \widehat{j}_h(\prox{\psi_h/\tau}{v_h})\|_U.
	\end{align}
	
	One relationship between
	the normal map-based criticality measure and its approximation
	is provided by the following.
	If $v_h \in U_h$ satisfies $\chi_{\text{nor}, h}(v_h;\tau)  =0$, 
	and $\psi_h = \psi$ on $U_h$, then 
	\eqref{eq:normap} and \eqref{eq:normaph}  ensure
	\begin{align*}
		\chi_{\text{nor}}(v_h;\tau) =
		\|\nabla \widehat{J}(\prox{\psi/\tau}{v_h})
		- \nabla \widehat{j}_h(\prox{\psi/\tau}{v_h})\|_U.
	\end{align*}
	
	The next assumption quantifies the difference between
	the proximity operators of $\psi/\tau$ and $\psi_h/\tau$.
	
	\begin{assumption}
		\label{assumption:prox}
		Let $U_h$ be as in \Cref{assumption:discretizedproblem}.
		For a nondecreasing function 
		$\rho_{\mathrm{prox}} : [0,\infty) \to [0,\infty)$
		with $\rho_{\mathrm{prox}}(0) = 0$,
		\begin{align*}
			\|\prox{\psi/\tau}{v_h}-\prox{\psi_h/\tau}{v_h}\|_U
			\leq \rho_{\mathrm{prox}}(h)
			\quad \text{for all} 
			\quad v_h \in U_h, \quad h > 0.
		\end{align*}
	\end{assumption}
	
	If $\psi_h = \psi$ on $U_h$
	for each $h > 0$, then \Cref{assumption:prox} hold true
	with $\rho_{\mathrm{prox}} = 0$.

	We state our main result.
	
	\begin{theorem}
		\label{thm:normbased}
		If $\tau >0 $, and 
		\Cref{assumption:trueproblem,assumption:discretizedproblem,%
			assumption:gelfand-projection,assumption:prox,assumption:uniformbounded} hold, then for all $h > 0$ and $\bar v_h \in W_U \cap U_h$, 
		\begin{align*}
			\chi_{\text{nor}}(\bar v_h;\tau)
			\leq \chi_{\text{nor},h}(\bar v_h;\tau) 
			+ (\tau + L_{\nabla \widehat{J}})
			\rho_{\mathrm{prox}}(h)
			+ \rho_{\nabla \widehat{J}}(h) 
			+\rho_{\Pi}(h)\ell_{\nabla \widehat{J}}.
		\end{align*}
	\end{theorem}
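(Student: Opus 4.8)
The plan is to compare the two normal maps at the single point $\bar v_h$ and to control their difference by the triangle inequality, reducing everything to the estimates supplied by the assumptions. Writing $p \coloneqq \prox{\psi/\tau}{\bar v_h}$ and $p_h \coloneqq \prox{\psi_h/\tau}{\bar v_h}$, the definitions \eqref{eq:normap}--\eqref{eq:normaph} together with the triangle inequality give
\[
\chi_{\text{nor}}(\bar v_h;\tau) \leq \chi_{\text{nor},h}(\bar v_h;\tau) + \bigl\|\tau(p_h - p) + \nabla \widehat{J}(p) - \nabla \widehat{j}_h(p_h)\bigr\|_U,
\]
since the terms $\tau \bar v_h$ cancel. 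It therefore suffices to bound the last norm by $(\tau + L_{\nabla \widehat{J}})\rho_{\mathrm{prox}}(h) + \rho_{\nabla \widehat{J}}(h) + \rho_{\Pi}(h)\ell_{\nabla \widehat{J}}$, which I would do by splitting it into $\tau\|p_h - p\|_U + \|\nabla \widehat{J}(p) - \nabla \widehat{j}_h(p_h)\|_U$.

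First I would record the membership facts that unlock the assumptions. Since a minimizer of the proximal subproblem must lie in the domain of the regularizer, we have $p \in \mathrm{dom}(\psi)$ and $p_h \in \mathrm{dom}(\psi_h) \subset U_h$; by \Cref{assumption:uniformbounded} both points lie in $W_U$, and in particular $p_h \in W_U \cap U_h$. This is exactly the regularity needed to invoke Lipschitz continuity of $\nabla\widehat J$, the $\widehat U$-bound \eqref{eq:gradientbound}, and the gradient-error bound \eqref{eq:gradienterror} at $p_h$, as well as the gradient formula \eqref{eq:nablajh}. The proximal term is immediate: because $\bar v_h \in U_h$, \Cref{assumption:prox} yields $\tau\|p_h - p\|_U \leq \tau\rho_{\mathrm{prox}}(h)$.

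The heart of the argument is the decomposition of the gradient difference. Using \eqref{eq:nablajh} to write $\nabla \widehat{j}_h(p_h) = \Pi_h \nabla \widehat{J}_h(p_h)$, I would insert the intermediate quantities $\nabla\widehat J(p_h)$ and $\Pi_h\nabla\widehat J(p_h)$ and telescope:
\[
\nabla \widehat{J}(p) - \nabla \widehat{j}_h(p_h) = \underbrace{\bigl[\nabla \widehat{J}(p) - \nabla \widehat{J}(p_h)\bigr]}_{\text{Lipschitz}} + \underbrace{\bigl[\nabla \widehat{J}(p_h) - \Pi_h \nabla \widehat{J}(p_h)\bigr]}_{\text{projection error}} + \underbrace{\Pi_h\bigl[\nabla \widehat{J}(p_h) - \nabla \widehat{J}_h(p_h)\bigr]}_{\text{gradient approximation}}.
\]
The first bracket is bounded by $L_{\nabla\widehat J}\|p - p_h\|_U \leq L_{\nabla\widehat J}\rho_{\mathrm{prox}}(h)$ by Lipschitz continuity of $\nabla\widehat J$ on $W_U$; the second by $\rho_{\Pi}(h)\|\nabla\widehat J(p_h)\|_{\widehat U} \leq \rho_{\Pi}(h)\ell_{\nabla\widehat J}$ via \Cref{assumption:gelfand-projection} combined with \eqref{eq:gradientbound}; and the third by $\|\nabla\widehat J(p_h) - \nabla\widehat J_h(p_h)\|_U \leq \rho_{\nabla\widehat J}(h)$, using that $\Pi_h$ has operator norm at most one and \eqref{eq:gradienterror}. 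Summing these three contributions and adding $\tau\rho_{\mathrm{prox}}(h)$ produces precisely the claimed bound.

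The one genuinely delicate point, rather than any hard estimate, is selecting the correct anchor point for the gradient comparison. Because \eqref{eq:gradientbound} and \eqref{eq:gradienterror} are only available on $W_U \cap U_h$, the projection and gradient-approximation errors must be evaluated at the \emph{discrete} proximal point $p_h \in U_h$, not at $p$, which need not lie in $U_h$. Transferring from $p$ to $p_h$ is then forced through Lipschitz continuity of $\nabla\widehat J$, and this is exactly why the factor $\tau + L_{\nabla\widehat J}$ multiplies $\rho_{\mathrm{prox}}(h)$. Getting this bookkeeping right, and in particular inserting $\Pi_h\nabla\widehat J(p_h)$ so that the projection estimate and the gradient-error estimate apply to cleanly separated terms, is the crux; the remainder is the triangle inequality and nonexpansiveness of $\Pi_h$.
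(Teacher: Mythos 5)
Your proof is correct and follows essentially the same route as the paper: the paper's \Cref{prop:normbasedbasicestimate} performs exactly your decomposition, splitting the difference into the prox error $\tau\|p_h-p\|_U$, the Lipschitz term $\|\nabla\widehat J(p)-\nabla\widehat J(p_h)\|_U$, the projection error $\|\nabla\widehat J(p_h)-\Pi_h\nabla\widehat J(p_h)\|_U$, and the gradient-approximation term handled via $\nabla\widehat j_h(p_h)=\Pi_h\nabla\widehat J_h(p_h)$ and $\|\Pi_h\|\leq 1$ (the paper's \eqref{eq:Feb720241153}), all anchored at the discrete proximal point. Your observation that \eqref{eq:gradientbound} and \eqref{eq:gradienterror} force the anchor to be $p_h\in W_U\cap U_h$ rather than $p$ is precisely the bookkeeping the paper's lemma encodes.
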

	
	We prepare our proof of \Cref{thm:normbased}. 
	
	\begin{lemma}
		\label{prop:normbasedbasicestimate}
		Let \Cref{assumption:trueproblem,assumption:discretizedproblem}  hold,
		and let $v_h \in U_h \cap V_U$. For
		$u_h \coloneqq \prox{\psi_h/\tau}{v_h}$
		and
		$\bar u_h \coloneqq \prox{\psi/\tau}{v_h}$, we have
		\begin{align*}
			\|\tau(v_h -\bar u_h) 
			+ \nabla \widehat{J}(\bar u_h)\|_U
			& \leq 	\|\tau(v_h -u_h) + \nabla \widehat{j}_h(u_h)\|_U 
			+ \tau \|\bar u_h -u_h\|_U
			\\
			& \quad + \| \nabla \widehat{J}(\bar u_h)- \nabla \widehat{J}(u_h)\|_U
			\\
			& \quad + \|\nabla \widehat{J}_{h}(u_h)-\nabla \widehat{J}(u_h)\|_U
			+
			\|\nabla \widehat{J}(u_h)-\Pi_h \nabla \widehat{J}(u_h)\|_U.
		\end{align*}
	\end{lemma}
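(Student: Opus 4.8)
The plan is to derive the inequality from two applications of the triangle inequality, each preceded by an algebraic telescoping that introduces exactly the quantities appearing on the right-hand side. First I would add and subtract the vector $\tau(v_h - u_h) + \nabla \widehat{j}_h(u_h)$, whose norm is the leading ``optimization'' term. Since the $v_h$-contributions cancel, a direct computation gives
\begin{align*}
    \tau(v_h - \bar u_h) + \nabla \widehat{J}(\bar u_h)
    = \big[\tau(v_h - u_h) + \nabla \widehat{j}_h(u_h)\big]
    + \tau(u_h - \bar u_h)
    + \big[\nabla \widehat{J}(\bar u_h) - \nabla \widehat{j}_h(u_h)\big].
\end{align*}
Taking norms and applying the triangle inequality produces the first summand, the term $\tau\|\bar u_h - u_h\|_U$, and a residual gradient term $\|\nabla \widehat{J}(\bar u_h) - \nabla \widehat{j}_h(u_h)\|_U$ that still must be broken apart.

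To handle the residual gradient term I would invoke the gradient formula \eqref{eq:nablajh}, which yields $\nabla \widehat{j}_h(u_h) = \Pi_h \nabla \widehat{J}_h(u_h)$ for $u_h \in U_h \cap V_U$, and then telescope through the intermediate vectors $\nabla \widehat{J}(u_h)$ and $\Pi_h \nabla \widehat{J}(u_h)$:
\begin{align*}
    \nabla \widehat{J}(\bar u_h) - \nabla \widehat{j}_h(u_h)
    = \big[\nabla \widehat{J}(\bar u_h) - \nabla \widehat{J}(u_h)\big]
    + \big[\nabla \widehat{J}(u_h) - \Pi_h \nabla \widehat{J}(u_h)\big]
    + \Pi_h\big(\nabla \widehat{J}(u_h) - \nabla \widehat{J}_h(u_h)\big).
\end{align*}
The first two bracketed vectors already match two of the claimed summands. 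For the third, I would use that $\Pi_h$ is the orthogonal projection onto $U_h$ and hence nonexpansive, giving $\|\Pi_h(\nabla \widehat{J}(u_h) - \nabla \widehat{J}_h(u_h))\|_U \leq \|\nabla \widehat{J}_h(u_h) - \nabla \widehat{J}(u_h)\|_U$, which is precisely the remaining summand. A second triangle inequality then assembles the three pieces, and substituting back completes the estimate.

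The argument is essentially bookkeeping and uses none of the regularity hypotheses \Cref{assumption:gelfand-projection,assumption:prox,assumption:uniformbounded}; those enter only afterwards, when the individual terms are bounded by $\rho_{\mathrm{prox}}(h)$, $\rho_{\nabla \widehat{J}}(h)$, and $\rho_{\Pi}(h)$ in the proof of \Cref{thm:normbased}. The only point demanding care, and the closest thing to an obstacle, is to identify $\nabla \widehat{j}_h(u_h)$ with $\Pi_h \nabla \widehat{J}_h(u_h)$ rather than with $\nabla \widehat{J}(u_h)$: it is exactly this distinction that separates the objective-discretization error $\nabla \widehat{J}_h - \nabla \widehat{J}$ from the projection error $\nabla \widehat{J} - \Pi_h \nabla \widehat{J}$, so that each appears as its own individually controllable contribution.
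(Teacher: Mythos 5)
Your proposal is correct and matches the paper's own argument in substance: both proofs are the same telescoping through the intermediate vectors $\nabla \widehat{J}(u_h)$, $\Pi_h \nabla \widehat{J}(u_h)$, and $\nabla \widehat{j}_h(u_h) = \Pi_h \nabla \widehat{J}_h(u_h)$, with the gradient formula \eqref{eq:nablajh} and the fact that $\Pi_h$ has operator norm one doing the key work; you merely regroup the triangle inequalities in a different order. The only detail the paper makes explicit that you elide is the observation that $u_h \in \mathrm{dom}(\psi_h) \subset U_h \cap V_U$ and $\bar u_h \in \mathrm{dom}(\psi) \subset V_U$, which guarantees the gradients appearing in the estimate are defined.
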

	\begin{proof}
		We have $u_h \in \mathrm{dom}(\psi_h) \subset U_h \cap V_U$
		and $\bar u_h \in \mathrm{dom}(\psi) \subset V_U$.
		Using the triangle inequality, we have
		\begin{align*}
			\|\tau(v_h -\bar u_h) 
			+ \nabla \widehat{J}(\bar u_h)\|_U
			& 
			\leq 
			\|\tau(v_h -u_h) + \nabla \widehat{J}(u_h)\|_U
			+ \| \nabla \widehat{J}(\bar u_h)- \nabla \widehat{J}(u_h)\|_U
			\\
			& \quad + \tau \|\bar u_h -u_h\|_U.
		\end{align*}
		We further estimate
		\begin{align*}
			\|\tau(v_h - u_h) + \nabla \widehat{J}(u_h)\|_U
			& \leq 
			\|\tau(v_h - u_h) 
			+ \nabla \widehat{j}_h(u_h)\|_U
			+
			\|\nabla \widehat{j}_h(u_h)-\Pi_h \nabla \widehat{J}(u_h)\|_U
			\\
			& \quad + \|\nabla \widehat{J}(u_h)-\Pi_h \nabla \widehat{J}(u_h)\|_U.
		\end{align*}
		
		We have $\nabla \widehat{j}_h(u_h) = \Pi_h \nabla \widehat{J}_h(u_h)
		$
		(see \eqref{eq:nablajh}) and the operator norm
		of $\Pi_h$ is one. Hence
		\begin{align}
			\label{eq:Feb720241153}
			\|\nabla \widehat{j}_h(u_h)-\Pi_h \nabla \widehat{J}(u_h)\|_U
			\leq 
			\|\nabla \widehat{J}_h(u_h)-\nabla \widehat{J}(u_h)\|_U.
		\end{align}
		Putting together the pieces, 
		we obtain the assertion.
	\end{proof}
	
	\begin{proof}[{Proof of \Cref{thm:normbased}}]
		Using \Cref{prop:normbasedbasicestimate} and
		\Cref{assumption:trueproblem,assumption:discretizedproblem,%
			assumption:gelfand-projection,assumption:prox,assumption:uniformbounded},
		we obtain the assertion.
	\end{proof}

	We show that the convergence rates provided by
	\Cref{thm:normbased} are order-optimal.
	
	\begin{example}
		\label{example:normbased}
		Let 	
		$
		\xi(x) \coloneqq -x
		$, 
		and $\mathbf{1}(x) \coloneqq 1$.
		Let $\psi$ be the indicator function 
		of the
		box $\{u \in L^2(0,1) \colon  \xi \leq u \leq \mathbf{1}\}$.
		We consider  the infinite dimensional linear program
		\begin{align*}
			\min_{u \in L^2(0,1)}\, (\mathbf{1}, u)_{L^2(0,1)} + \psi(u). 
		\end{align*}
		
		We discretize $L^2(0,1)$ using the space of piecewise
		constant functions $U_h$ defined on an equidistant grid 
		$((j-1)/n, j/n)$ for $j = 1, \ldots, n$ of $[0,1]$
		with  $n \in \mathbb{N}$ being the number of grids. 
		We define $h \coloneqq 1/n$. 
		On each interval $((j-1)/n, j/n)$,
		the $L^2(0,1)$-orthogonal projection 
		$\Pi_h \colon L^2(0,1) \to U_h$ evaluated at $u \in L^2(0,1)$
		equals the average of $u$ on each of the intervals 
		\cite[eq.\ (1.123)]{Ern2004}.
		\Cref{assumption:gelfand-projection} is satisfied with 
		$\widehat{U} \coloneqq H^1(0,1)$
		and $\rho_{\Pi}(h)$ linear in $h$
		\cite[Prop.\ 1.135]{Ern2004}.
		Because of $\nabla \widehat{J}(u) = \mathbf{1}$
		for all $u \in L^2(\domain)$,
		we can choose $\rho_{\nabla \widehat{J}}$
		to be the zero function in  \eqref{eq:gradienterror}. 
		According to \Cref{lem:canonicalpsih},
		\Cref{assumption:prox} holds true with 
		$\rho_{\mathrm{prox}}$ linear in $h$.

		Let $\psi_h$ be the indicator function of 
		$\{u \in U_h\colon \Pi_h\xi \leq u \leq \mathbf{1}\}$.
		We consider the canonical finite dimensional approximation
		\begin{align*}
			\min_{u_h \in U_h}\, (\mathbf{1}, u_h)_{L^2(0,1)} + \psi_h(u_h). 
		\end{align*}
		
		We choose $\tau \coloneqq 1$.
		We define $u_h^* \coloneqq \Pi_h \xi$
		and $v_h^* \coloneqq u_h^* - \mathbf{1}$.
		Since $u_h^*$ solves the finite dimensional problem,
		\Cref{lem:chinor} yields
		$\chi_{\text{nor}, h}(v_h^*;\tau)  =0$.
		We have $\prox{\psi_h}{v_h^*} = \Pi_h \xi$, 
		and $\prox{\psi}{v_h^*} = \xi$.
		Using \eqref{eq:normap} and \eqref{eq:normaph}, 
		$\nabla \widehat{J}(\prox{\psi}{v_h^*})
		- \nabla \widehat{j}_h(\prox{\psi_h}{v_h^*}) = 0$,
		and
		$\|\xi - \Pi_h \xi\|_{L^2(0,1)} = n \cdot (h^2/4) = h/4$, we have
		\begin{align*}
			\chi_{\text{nor}}(v_h^*;1)  
			= \|\prox{\psi_h}{v_h^*} - \prox{\psi}{v_h^*}\|_{L^2(0,1)}
			=
			\|\xi - \Pi_h \xi\|_{L^2(0,1)} = (1/4) h.
		\end{align*}
	\end{example}

	\subsection{Canonical critically measure-based error estimates}

	We define the canonical criticality measure
	\begin{align*}
		\chi_{\text{can}}(u; \tau)
		& \coloneqq \|u-\prox{\psi/\tau}{u-(1/\tau)\nabla \widehat{J}(u)}\|_U,
	\end{align*}
	and its approximation
	\begin{align*}
		\chi_{\text{can},h}(u_h; \tau)
		& \coloneqq  \|u_h-\prox{\psi_h/\tau}{u_h-(1/\tau)\nabla \widehat{j}_h(u_h)}\|_U.
	\end{align*}
	
	\Cref{lem:chican_chinor} ensures 
	$\chi_{\text{can}}(\bar u_h; \tau) 
	\leq (1/\tau) \chi_{\text{nor}}(\bar v_h;\tau)$,
	where 
	$\bar u_h = \prox{\psi/\tau}{\bar v_h}$ and $\bar v_h \in U_h$.
	Combined with \Cref{thm:normbased}, we obtain error bounds on
	$\chi_{\text{can}}(\bar u_h; \tau)$.
	The control $\bar u_h = \prox{\psi/\tau}{\bar v_h}$ can be computed
	within a postprocessing step. 
	
	We establish error bounds 
	for the difference $\chi_{\text{can}}(u_h; \tau)
	- \chi_{\text{can},h}(u_h; \tau) $
	for $u_h \in U_h$.
	
	\begin{theorem}
		\label{thm:canbased}	
		If $\tau >0 $, and 
		\Cref{assumption:trueproblem,assumption:discretizedproblem,%
			assumption:gelfand-projection,assumption:prox,assumption:uniformbounded} hold, then for all $h > 0$ and $u_h \in W_U \cap U_h$, 
		\begin{align*}
			\chi_{\text{can}}(u_h; \tau)
			\leq \chi_{\text{can},h}(u_h; \tau) +
			(1/\tau)  \rho_{\nabla \widehat{J}}(h) + \rho_{\mathrm{prox}}(h)
			+(1/\tau) \rho_{\Pi}(h)\ell_{\nabla \widehat{J}}.
		\end{align*}	
	\end{theorem}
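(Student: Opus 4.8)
The plan is to mirror the proof of \Cref{thm:normbased}: I would peel off the optimization error $\chi_{\text{can},h}(u_h;\tau)$ by a single triangle inequality and then control the remaining discrepancy between the two prox-based points with nonexpansiveness and \Cref{assumption:prox,assumption:gelfand-projection,assumption:uniformbounded}. Abbreviating the two prox arguments as $a \coloneqq u_h-(1/\tau)\nabla \widehat{J}(u_h)$ and $b \coloneqq u_h-(1/\tau)\nabla \widehat{j}_h(u_h)$, so that $\chi_{\text{can}}(u_h;\tau)=\|u_h-\prox{\psi/\tau}{a}\|_U$ and $\chi_{\text{can},h}(u_h;\tau)=\|u_h-\prox{\psi_h/\tau}{b}\|_U$, the triangle inequality gives
\[
	\chi_{\text{can}}(u_h;\tau)\leq \chi_{\text{can},h}(u_h;\tau)+\|\prox{\psi_h/\tau}{b}-\prox{\psi/\tau}{a}\|_U,
\]
and everything reduces to bounding the last prox difference.

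For that difference I would insert the intermediate point $\prox{\psi/\tau}{b}$ and split it as $\|\prox{\psi_h/\tau}{b}-\prox{\psi/\tau}{b}\|_U+\|\prox{\psi/\tau}{b}-\prox{\psi/\tau}{a}\|_U$. The first summand is exactly the quantity estimated in \Cref{assumption:prox}, which demands that its argument lie in $U_h$; this is the one spot needing care, and I would check $b\in U_h$ using that $u_h\in U_h$ and, by the gradient formula \eqref{eq:nablajh}, $\nabla \widehat{j}_h(u_h)=\Pi_h\nabla \widehat{J}_h(u_h)\in U_h$. Hence the first summand is at most $\rho_{\mathrm{prox}}(h)$. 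The second summand I would bound by nonexpansiveness of $\prox{\psi/\tau}{}$ \cite[Prop.\ 12.28]{Bauschke2011}, giving $\|\prox{\psi/\tau}{b}-\prox{\psi/\tau}{a}\|_U\leq\|b-a\|_U=(1/\tau)\|\nabla \widehat{J}(u_h)-\nabla \widehat{j}_h(u_h)\|_U$.

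It remains to control the gradient mismatch $\|\nabla \widehat{J}(u_h)-\nabla \widehat{j}_h(u_h)\|_U$, which is where the ``higher regularity'' assumptions enter. Writing $\nabla \widehat{j}_h(u_h)=\Pi_h\nabla \widehat{J}_h(u_h)$ and inserting $\Pi_h\nabla \widehat{J}(u_h)$, I would split it into the projection error $\|\nabla \widehat{J}(u_h)-\Pi_h\nabla \widehat{J}(u_h)\|_U$ and the approximation error $\|\Pi_h(\nabla \widehat{J}(u_h)-\nabla \widehat{J}_h(u_h))\|_U$. The former is at most $\rho_{\Pi}(h)\ell_{\nabla \widehat{J}}$, since $\nabla \widehat{J}(u_h)\in\widehat{U}$ with $\|\nabla \widehat{J}(u_h)\|_{\widehat{U}}\leq\ell_{\nabla \widehat{J}}$ by \Cref{assumption:uniformbounded} and $\|\Pi_h v-v\|_U\leq\rho_{\Pi}(h)\|v\|_{\widehat{U}}$ by \Cref{assumption:gelfand-projection}; the latter is at most $\rho_{\nabla \widehat{J}}(h)$, since $\Pi_h$ has operator norm one and by \eqref{eq:gradienterror}. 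Collecting the three contributions, and noting the gradient terms carry the factor $1/\tau$, reproduces the claimed bound. I do not expect a genuine obstacle: the argument is a clean chain of triangle inequalities, and the only delicate point is verifying $b\in U_h$ so that \Cref{assumption:prox} applies.
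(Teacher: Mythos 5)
Your proposal is correct and takes essentially the same route as the paper's proof: a triangle-inequality decomposition that peels off $\chi_{\text{can},h}(u_h;\tau)$, the prox-switch controlled by \Cref{assumption:prox} (whose applicability hinges, exactly as you note, on the prox argument lying in $U_h$, which holds since $\nabla \widehat{j}_h(u_h)=\Pi_h\nabla\widehat{J}_h(u_h)\in U_h$ by \eqref{eq:nablajh}), and nonexpansiveness of the proximity operator combined with \eqref{eq:Feb720241153} and \Cref{assumption:gelfand-projection,assumption:uniformbounded} to handle the gradient mismatch. The only cosmetic difference is where you switch between $\prox{\psi_h/\tau}{}$ and $\prox{\psi/\tau}{}$ --- you switch at the argument $u_h-(1/\tau)\nabla\widehat{j}_h(u_h)$ and absorb the whole gradient gap in one nonexpansiveness step (a three-term split), while the paper switches at the intermediate point $u_h-(1/\tau)\Pi_h\nabla\widehat{J}(u_h)$ (a four-term split); both yield the identical bound.
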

	
	\begin{proof}
		The triangle inequality yields
		\begin{align*}
			\chi_{\text{can}}(u_h; \tau)
			& = \|u_h-\prox{\psi/\tau}{u_h-(1/\tau)\nabla \widehat{J}(u_h)}\|_U
			\\
			& \leq 
			\|u_h-\prox{\psi_h/\tau}{u_h-(1/\tau)\nabla \widehat{j}_h(u_h)}\|_U
			\\
			& \quad + 
			\|\prox{\psi_h/\tau}{u_h-(1/\tau) \Pi_h\nabla \widehat{J}(u_h)}
			-\prox{\psi_h/\tau}{u_h-(1/\tau)\nabla \widehat{j}_h(u_h)}\|_U
			\\
			& \quad + 
			\|\prox{\psi_h/\tau}{u_h-(1/\tau) \Pi_h\nabla \widehat{J}(u_h)}
			-\prox{\psi/\tau}{u_h-(1/\tau) \Pi_h \nabla \widehat{J}(u_h)}\|_U
			\\
			& \quad + 
			\|\prox{\psi/\tau}{u_h-(1/\tau) \nabla \widehat{J}(u_h)}
			-\prox{\psi/\tau}{u_h-(1/\tau) \Pi_h \nabla \widehat{J}(u_h)}\|_U.
		\end{align*}
		Since $\prox{\psi_h/\tau}{\cdot}$ is firmly nonexpansive, we
		obtain 
		\begin{align*}
			& \|\prox{\psi_h/\tau}{u_h-(1/\tau) \Pi_h\nabla \widehat{J}(u_h)}
			-\prox{\psi_h/\tau}{u_h-(1/\tau)\nabla \widehat{j}_h(u_h)}\|_U
			\\
			& \leq  (1/\tau) 
			\|\Pi_h\nabla \widehat{J}(u_h)
			-\nabla \widehat{j}_h(u_h)\|_U.
		\end{align*}
		Combined with  \eqref{eq:Feb720241153} and
		\Cref{assumption:uniformbounded}, we obtain
		\begin{align*}
			\|\prox{\psi_h/\tau}{u_h-(1/\tau) \Pi_h\nabla \widehat{J}(u_h)}
			-\prox{\psi_h/\tau}{u_h-(1/\tau)\nabla \widehat{j}_h(u_h)}\|_U
			\leq   (1/\tau)  \rho_{\nabla \widehat{J}}(h). 
		\end{align*}
		
		Since $\prox{\psi/\tau}{\cdot}$ is firmly nonexpansive, 
		\Cref{assumption:uniformbounded} ensures
		\begin{align*}
			& \|\prox{\psi/\tau}{u_h-(1/\tau) \nabla \widehat{J}(u_h)}
			-\prox{\psi/\tau}{u_h-(1/\tau) \Pi_h \nabla \widehat{J}(u_h)}\|_U
			\\
			& \leq
			(1/\tau) \| \nabla \widehat{J}(u_h)
			-\Pi_h \nabla \widehat{J}(u_h)\|_U
			\leq (1/\tau) \rho_{\Pi}(h)\ell_{\nabla \widehat{J}}.
		\end{align*}
		Combined with \Cref{assumption:prox}, we obtain the assertion.
	\end{proof}
	
	We show that the convergence rates in \Cref{thm:canbased} are order-optimal.
	
	\begin{example}
		We reconsider \Cref{example:normbased}. 
		We have 
		$\chi_{\text{can}}(u_h^*; 1) = \|\Pi_h\xi - \prox{\psi}{u_h^*-
			\mathbf{1}}\|_{L^2(0,1)} = \|\xi - \Pi_h \xi\|_{L^2(0,1)} = (1/4) h$.
		Combined with the findings in 
		\Cref{example:normbased}, 
		we find that the convergence rate in \Cref{thm:canbased} 
		cannot be improved in general.
	\end{example}

	\subsection{Gap function-based error estimates}
	\label{subsection:gapfunction}

	Throughout the section, we consider the composite functions
	\begin{align}
		\label{eq:compositephi}
		\psi(u) \coloneqq \phi(u) + I_{U_{\text{ad}}}(u),
		\quad \text{and} \quad 
		\psi_h(u_h) \coloneqq \phi(u_h) + I_{\Pi_h U_{\text{ad}}}(u_h),
	\end{align}
	where $\phi \colon U \to  \mathbb{R}$ is a real-valued convex
	function,  and $U_{\text{ad}} \subset U$ is a closed, convex, 
	bounded, nonempty set.
	We denote by $L_\phi$ the Lipschitz constant of $\phi$
	on the bounded set $W_U$ from 
	\Cref{assumption:uniformbounded}.

	We define 
	gap function $\chi_{\text{gap}} \colon V_U \to (-\infty,\infty]$  by
	\begin{align}
		\label{eq:chivar}
		\chi_{\text{gap}}(u)
		& \coloneqq   \sup_{v \in U_{\text{ad}} }\, \big\{\,
		(\nabla \widehat{J}(u), u-v)_{U} + \phi(u)- \phi(v)
		\,\big\}.
	\end{align}
	Its approximation
	$\chi_{\text{gap},h} \colon V_U \cap U_h \to (-\infty,\infty]$ 
	is defined by
	\begin{align}
		\label{eq:chivarh}
		\chi_{\text{gap},h}(u_h)
		& \coloneqq   \sup_{v_h \in \Pi_h U_{\text{ad}} }\, 
		\big\{\,
		(\nabla \widehat{j}_h(u_h), u_h-v_h)_{U} + \phi(u_h)- \phi(v_h)
		\,\big\}.
	\end{align}
	
	The composite functions \eqref{eq:compositephi}
	allow for  $\chi_{\text{gap}}(u_h) < \infty$ 
	even if $u_h \not\in\mathrm{dom}(\psi)$.
	If $u_h \in U_{\text{ad}}$, then 
	\Cref{lem:chicanchivar} yields
	$\tau \chi_{\text{can}}(u_h;\tau)^2 \leq \chi_{\text{gap}}(u_h)$
	for each $\tau >0$,
	providing a lower bound on $\chi_{\text{gap}}(u_h)$
	in terms of the squared canonical criticality measure.
	
	As a substitute of \Cref{assumption:prox}, we 
	formulate \Cref{assumption:proj}.
	
	\begin{assumption}
		\label{assumption:proj}
		Let $U_h$ be as in \Cref{assumption:discretizedproblem}.
		For a nondecreasing function 
		$\rho_{\mathrm{proj}} : [0,\infty) \to [0,\infty)$
		with $\rho_{\mathrm{proj}}(0) = 0$,
		\begin{align*}
			\|\proj{U_{\text{ad}}}{v_h}-\proj{\Pi_hU_{\text{ad}}}{v_h}\|_U
			\leq \rho_{\mathrm{proj}}(h)
			\quad \text{for all} 
			\quad v_h \in U_h, \quad h > 0.
		\end{align*}
	\end{assumption}
	
	We denote by $\mathrm{diam}(W_U)$ the diameter of
	$W_U$.
	
	\begin{theorem}
		\label{thm:gapbased}	
		Suppose that for all $h > 0$
		and $u \in U$,
		we have $\phi(\Pi_h u) \leq \phi(u)$. 
		If 
		\Cref{assumption:trueproblem,assumption:discretizedproblem,%
			assumption:gelfand-projection,assumption:proj,assumption:uniformbounded} hold, then for all $h > 0$ and $u_h \in W_U \cap U_h$
		with $\bar u_h \coloneqq \proj{U_{\text{ad}}}{u_h}$,
		\begin{align*}
			\chi_{\text{gap}}(\bar u_h)
			&\leq \chi_{\text{gap},h}(u_h)
			+ (\ell_{\nabla \widehat{J}} +
			L_\phi) \rho_{\mathrm{proj}}(h)
			\\
			& \quad + 
			\mathrm{diam}(W_U)
			\big[
			L_{\nabla \widehat{J}} \rho_{\mathrm{proj}}(h)+
			\rho_{\Pi}(h)\ell_{\nabla \widehat{J}}
			+ \rho_{\nabla \widehat{J}}(h)
			\big].
		\end{align*}
	\end{theorem}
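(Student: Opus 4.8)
The plan is to bound the continuous gap $\chi_{\text{gap}}(\bar u_h)$ termwise against the discrete gap $\chi_{\text{gap},h}(u_h)$: fix an arbitrary test element $v \in U_{\text{ad}}$ in the supremum defining $\chi_{\text{gap}}(\bar u_h)$ and pair it with the admissible discrete test element $\Pi_h v \in \Pi_h U_{\text{ad}}$ in $\chi_{\text{gap},h}(u_h)$. I would first record that $\bar u_h \in U_{\text{ad}} = \mathrm{dom}(\psi) \subset W_U$ and every test point lies in $W_U$, so $\|\bar u_h - v\|_U$ and $\|u_h - v\|_U$ are at most $\mathrm{diam}(W_U)$, and that, writing $\hat u_h \coloneqq \proj{\Pi_h U_{\text{ad}}}{u_h}$, \Cref{assumption:proj} gives $\|\bar u_h - \hat u_h\|_U \leq \rho_{\mathrm{proj}}(h)$. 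When $u_h$ is discretely feasible, i.e.\ $u_h \in \Pi_h U_{\text{ad}} = \mathrm{dom}(\psi_h)$, then $\hat u_h = u_h$ and hence $\|\bar u_h - u_h\|_U \leq \rho_{\mathrm{proj}}(h)$; this is the single controlled quantity that will carry all error terms not multiplied by $\mathrm{diam}(W_U)$.

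I would then carry out the comparison in a few elementary steps, each isolating one error contribution. First, move the base point of the gradient from $\bar u_h$ to $u_h$: Lipschitz continuity of $\nabla\widehat{J}$ on $W_U$ gives $(\nabla\widehat{J}(\bar u_h) - \nabla\widehat{J}(u_h), \bar u_h - v)_U \leq L_{\nabla\widehat{J}}\|\bar u_h - u_h\|_U\,\mathrm{diam}(W_U)$, producing the $\mathrm{diam}(W_U)L_{\nabla\widehat{J}}\rho_{\mathrm{proj}}(h)$ term. Second, split off the self-terms: $(\nabla\widehat{J}(u_h), \bar u_h - u_h)_U \leq \ell_{\nabla\widehat{J}}\|\bar u_h - u_h\|_U$ via the gradient bound \eqref{eq:gradientbound}, and $\phi(\bar u_h) - \phi(u_h) \leq L_\phi\|\bar u_h - u_h\|_U$ via Lipschitz continuity of $\phi$; together these yield $(\ell_{\nabla\widehat{J}} + L_\phi)\rho_{\mathrm{proj}}(h)$ and leave the residual $(\nabla\widehat{J}(u_h), u_h - v)_U + \phi(u_h) - \phi(v)$.

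Third, I would pass from the true gradient to the discrete gradient and simultaneously switch the test element to $\Pi_h v$. The key algebraic simplification is that $\nabla\widehat{j}_h(u_h) \in U_h$, so self-adjointness of $\Pi_h$ gives $(\nabla\widehat{j}_h(u_h), u_h - \Pi_h v)_U = (\nabla\widehat{j}_h(u_h), u_h - v)_U$; consequently the comparison of the residual with $(\nabla\widehat{j}_h(u_h), u_h - \Pi_h v)_U + \phi(u_h) - \phi(\Pi_h v)$ collapses to $(\nabla\widehat{J}(u_h) - \nabla\widehat{j}_h(u_h), u_h - v)_U$ plus $\phi(\Pi_h v) - \phi(v)$. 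The latter is $\leq 0$ by the hypothesis $\phi(\Pi_h u) \leq \phi(u)$, and the former is bounded by $\|\nabla\widehat{J}(u_h) - \nabla\widehat{j}_h(u_h)\|_U\,\mathrm{diam}(W_U)$. Estimating $\|\nabla\widehat{J}(u_h) - \nabla\widehat{j}_h(u_h)\|_U \leq \rho_{\Pi}(h)\ell_{\nabla\widehat{J}} + \rho_{\nabla\widehat{J}}(h)$ through \eqref{eq:Feb720241153} together with \Cref{assumption:gelfand-projection,assumption:uniformbounded} produces the final bracket $\mathrm{diam}(W_U)[\rho_{\Pi}(h)\ell_{\nabla\widehat{J}} + \rho_{\nabla\widehat{J}}(h)]$. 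What remains, $(\nabla\widehat{j}_h(u_h), u_h - \Pi_h v)_U + \phi(u_h) - \phi(\Pi_h v)$, is at most $\chi_{\text{gap},h}(u_h)$ because $\Pi_h v \in \Pi_h U_{\text{ad}}$; taking the supremum over $v \in U_{\text{ad}}$ then yields the claimed bound.

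The main obstacle is the mismatch of base points $\bar u_h$ versus $u_h$: in contrast to the distances to the test points, $\|\bar u_h - u_h\|_U = \mathrm{dist}(u_h, U_{\text{ad}})$ is not automatically $O(\rho_{\mathrm{proj}}(h))$, so the crux is to route this discrepancy through the discrete projection $\hat u_h = \proj{\Pi_h U_{\text{ad}}}{u_h}$ and invoke \Cref{assumption:proj}, which is immediate once $u_h = \hat u_h$; one must then verify that all three self-term estimates (with constants $L_{\nabla\widehat{J}}$, $\ell_{\nabla\widehat{J}}$, and $L_\phi$) rely only on this one controlled quantity. A secondary point to watch is that the gradient bound \eqref{eq:gradientbound} is phrased in the $\widehat{U}$-norm, so the (unit) embedding constant of $\widehat{U} \hookrightarrow U$ must be absorbed before using it against $\|\bar u_h - u_h\|_U$ inside the $U$-inner product.
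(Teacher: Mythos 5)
Correct, and essentially the paper's own argument: you pair the continuous test point $v \in U_{\text{ad}}$ with the discrete test point $\Pi_h v$, use the self-adjointness identity behind \eqref{eq:Feb720241153} to collapse the gradient comparison, and charge the base-point mismatch $\|\bar u_h - u_h\|_U$ to \Cref{assumption:proj}, reproducing the paper's error terms with identical constants (taking the supremum over arbitrary $v$ at the end instead of invoking a maximizer $\bar v_h$ as the paper does, and applying \eqref{eq:gradientbound} at $u_h \in W_U \cap U_h$ rather than at $\bar u_h \notin U_h$, are only cosmetic --- indeed slightly tidier --- variations). Your explicit observation that $\|\bar u_h - u_h\|_U \leq \rho_{\mathrm{proj}}(h)$ requires the discrete feasibility $u_h \in \Pi_h U_{\text{ad}} = \mathrm{dom}(\psi_h)$ is a condition the paper's proof uses silently, so your write-up introduces no gap the paper does not also have.
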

	\begin{proof}
		Let $\bar v_h$ be a solution to \eqref{eq:chivar}
		with $u = \bar u_h$.
		Hence
		\begin{align*}
			\chi_{\text{gap}}(\bar u_h)
			& = 
			(\nabla \widehat{J}(
			\bar u_h), \bar u_h-\bar v_h)_{U} + \phi(\bar u_h)- \phi(\bar v_h).
		\end{align*}
		Using \eqref{eq:chivarh}, 
		and $(\nabla \widehat{j}_h(u_h), \Pi_h \bar v_h )_{U}
		= (\nabla \widehat{j}_h(u_h),  \bar v_h )_{U}$, 
		we obtain
		\begin{align*}
			\chi_{\text{gap},h}(u_h)
			\geq 
			(\nabla \widehat{j}_h(u_h), u_h-\bar v_h )_{U} + 
			\phi(u_h)- \phi(\Pi_h\bar v_h ).
		\end{align*}
		Hence
		\begin{align*}
			\chi_{\text{gap}}(u_h)
			& \leq \chi_{\text{gap},h}(u_h)
			+ (\nabla \widehat{J}(\bar u_h)-\nabla \widehat{j}_h(u_h), u_h-\bar v_h)_{U} +  \phi(\Pi_h\bar v_h ) - \phi(\bar v_h)
			\\
			& \quad + L_\phi\|u_h - \bar u_h\|_U 
			+ (\nabla \widehat{J}(\bar u_h), \bar u_h - u_h)_{U} 
			\\
			& \leq \chi_{\text{gap},h}(u_h)
			+
			\|\nabla \widehat{J}(\bar u_h)-\nabla \widehat{j}_h(u_h)\|_U
			\mathrm{diam}(W_U)
			+ (\ell_{\nabla \widehat{J}} + L_\phi ) \rho_{\mathrm{proj}}(h).
		\end{align*}
		Moreover
		\begin{align*}
			\|\nabla \widehat{J}(\bar u_h)-\nabla \widehat{j}_h(u_h)\|_U
			\leq 
			\|\nabla \widehat{J}(u_h)-\nabla \widehat{j}_h(u_h)\|_U
			+ L_{\nabla \widehat{J}} \rho_{\mathrm{proj}}(h)
		\end{align*}
		and
		\begin{align*}
			\|\nabla \widehat{J}(u_h)-\nabla \widehat{j}_h(u_h)\|_U
			\leq 
			\|\Pi_h\nabla \widehat{J}(u_h)-\nabla \widehat{j}_h(u_h)\|_U
			+ 	\|\nabla \widehat{J}(u_h)-\Pi_h\nabla \widehat{J}(u_h)\|_U.
		\end{align*}
		Combined with \eqref{eq:Feb720241153}, and 
		\Cref{assumption:gelfand-projection,assumption:uniformbounded}, 
		we obtain the convergence bound.
	\end{proof}

	\Cref{lem:canonicalpsih} provides an example function
	satisfying $\phi(\Pi_h u) \leq \phi(u)$ for all $u \in U$
	and $h > 0$.
	In \cref{subsect:numillustrations},
	we empirically show that the upper bound provided by
	\Cref{thm:gapbased}	 cannot be improved in general.
	We use  a one dimensional boundary value problem
	(see \cref{subsec:linearproblem}) to demonstrate this.

	\section{Applications}
	
	Our technical assumption can be verified for PDE-constrained optimization,
	for example. 
	
	\subsection{PDE-constrained optimization}
	\label{subsec:pde-constrained-optimization}
	
	\Cref{assumption:trueproblem,assumption:discretizedproblem}
	have been motivated by PDE-constrained optimization.
	In this section, we demonstrate how to obtain operators $S$
	and $S_h$ satisfying the conditions in 
	\Cref{assumption:trueproblem,assumption:discretizedproblem}.
	In  PDE-constrained optimization, 
	the operator $S$ typically represents a PDE solution 
	or more generally the solution operator to 
	an operator equation.
	A large class of PDE solution operators satisfy the following framework
	inspired by those developed in \cite{Antil2018,Hinze2009,Ulbrich2011}.
	
	Let $Y$ and $U$ be as in \Cref{assumption:trueproblem}. 
	We consider the following conditions:
	\begin{enumerate}[nosep]
		\item $Z$ is a real Banach space, and
		$E : Y \times U \to Z$ is continuously differentiable,
		\item for each $u \in V_U$, $y=S(u) \in Y$
		is the unique solution to $E(y,u) = 0$, and
		\item $\Du_{y} E(S(u),u)$ has a bounded inverse for each
		$u \in V_U$.
	\end{enumerate}		
	Under these assumptions, $S \colon V_U \to Y$ is continuously
	differentiable according to the implicit function theorem.
	In the context of PDE-constrained optimization, 
	the approximations $S_h$ of $S$ typically satisfy 
	the following mathematical framework.
	See also \Cref{example:Eh}.
	
	Let $Y_h$ and $U_h$ be as in \Cref{assumption:discretizedproblem}. 
	For each $h > 0$, 
	\begin{enumerate}[nosep]
		\item $Z_h$ is a closed subspace of $Z$, and 			
		$E_h : Y_h \times U \to Z_h$ is continuously
		differentiable, 
		\item for each $u \in V_U$, 
		$y_h = S_h(u) \in Y_h$ is the unique solution to 
		$E_h(y_h, u) = 0$, and
		\item $\Du_{y_h} E_{h}(S_h(u),u)$ has a bounded inverse
		for each $u \in V_U$.
	\end{enumerate}
	Under these assumptions, $S_h \colon V_U \to Y$ is continuously
	differentiable for each $h > 0$.

	We  consider a typical set of operators
	$E_h$.
	
	\begin{example}
		\label{example:Eh}
		If $Z = Y^*$ and  $Z_h = Y_h^*$, then
		we can define
		\begin{align}
			\label{eq:Ehexample}
			E_h(y_h, u) \coloneqq  \iota_{Y_h,Y}^*E(\iota_{Y_h,Y}y_h, u).
		\end{align}
		However, choices other than
		\eqref{eq:Ehexample}
		are relevant for applications as is the case for stabilized
		finite element discretizations of convection-diffusion
		control problems \cite{Becker2007} or when quadrature
		and interpolation errors are considered \cite{Nochetto2006}, 
		for example. 
	\end{example}

	\subsection{Sparsity-promoting regularizers and box constraints}
	\label{subsect:sparsity}

	Sparsity-promoting regularizers continue to be of significant interest
	in PDE-constrained optimization and optimal control. 
	The section's main purpose is to verify
	\Cref{assumption:gelfand-projection,assumption:prox}
	for this contemporary $\psi$ and its canonical approximation $\psi_h$.
	
	Throughout the section, $\domain \subset \mathbb{R}^d$ 
	with $d \in \{1,2\}$ is a convex, 
	bounded domain. If $d = 2$, we require $\domain$ be a polygon.
	We refer the reader to \cite[Defs.\ 1.46 and 1.47]{Ern2004} for 
	definitions of domains and polygons.
	We consider $U \coloneqq L^2(\domain)$
	as the control space in \Cref{assumption:trueproblem}.
	Throughout the section, we consider the functions 
	$\psi : L^2(\domain) \to [0,\infty]$ and 
	$\psi_h : U_h \to [0,\infty]$ defined by
	\begin{align}
		\label{eq:psis}
		\begin{aligned}
			\psi(u) & \coloneqq  \beta \|u\|_{L^1(\domain)} + I_{[\lb,\ub]}(u),
			\quad \text{and} \quad 
			\psi_h(u_h) 
			\coloneqq 
			\beta \|u_h\|_{L^1(\domain)} + I_{U_h \cap [\lb_h, \ub_h]}(u_h),
		\end{aligned}
	\end{align}
	where $\lb_h$, $\ub_h \in U_h$, $\lb$, $\ub \in U$, and
	$\beta \geq 0$.

	To discretize $U$, we consider a family
	of successively refined meshes $(\mathcal{T}_h)_{h > 0}$ of $\domain$
	\cite[Defs.\ 1.49 and 1.53]{Ern2004}. 
	If $d = 1$, we require each $K \in \mathcal{T}_h$ be a closed interval
	and if $d = 2$, then  each $K \in \mathcal{T}_h$ is a
	(closed) triangle.
	For each  $K \in \mathcal{T}_h$, let $h_K$ be the Euclidean
	diameter of $K$ \cite[p.\ 32]{Ern2004}.
	We define $h \coloneqq \max_{K \in \mathcal{T}_h}\, h_K$.
	Let $\rho_K$ be the diameter of the largest Euclidean ball  in $K$.
	We require  $(\mathcal{T}_h)_{h > 0}$ be shape-regular
	and  quasi-uniform.
	Hence, there exist constants $\upsilon_{\mathcal{T}} > 0$
	and $\sigma_{\mathcal{T}} > 0$ such that
	\begin{align}
		\label{eq:shape-regular-quasi-uniform}
		h_K \leq \upsilon_{\mathcal{T}} \rho_K
		\quad \text{and} \quad 
		h_K \geq \sigma_{\mathcal{T}} h,
		\quad \text{for all} \quad K \in \mathcal{T}_h, \quad h > 0.
	\end{align}
	If $d = 1$, then $\upsilon_{\mathcal{T}} = 1$
	\cite[Rem.\ 1.108]{Ern2004}. If $d = 2$, then
	$\upsilon_{\mathcal{T}}$ can be computed using
	a formula provided in \cite[Rem.\ 1.108]{Ern2004}.
	Since each $K \in \mathcal{T}_h$
	is a triangle if $d = 2$
	and an interval if $d = 1$,
	each $K \in \mathcal{T}_h$ is convex. We denote by $|O|$ the 
	Lebesgue measure of a measurable subset $O \subset \mathbb{R}^d$
	and denote by $\accentset{\circ}{O}$ 
	the interior of a set $O \subset \mathbb{R}^d$.

	We choose $U_h$ as the space of piecewise constant functions. More precisely,
	we define
	\begin{align*}
		U_h \coloneqq 
		\{\,
		u_h \in L^\infty(\domain) \colon \, 
		u_{h|K}  \,\,\text{is constant for each} \,\,
		K \in \mathcal{T}_h \,\}.
	\end{align*}
	For each $v \in L^2(\domain)$, $h > 0$, 
	and $K \in \mathcal{T}_h$, we have
	(see, e.g., \cite[p.\ 73]{Ern2004})
	\begin{align*}
		[\Pi_h v](x) = 
		|\accentset{\circ}{K}|^{-1} (v,\mathbf{1})_{L^2(\accentset{\circ}{K})}
		\quad \text{for almost every} \quad x \in K.
	\end{align*}
	Moreover, there exists a constant $c_{\Pi} > 0$ such that
	\begin{align}
		\label{eq:projectionP0}
		\begin{aligned}
			\|\Pi_h v - v\|_{L^2(\accentset{\circ}{K})}
			&\leq c_{\Pi} h_K |v|_{H^1(\accentset{\circ}{K})}\quad &&\text{for all} \quad 
			v \in H^1(\accentset{\circ}{K}), \quad 
			K \in \mathcal{T}_h, \quad h > 0, 
			\\
			\|\Pi_h v - v\|_{L^2(\domain)}
			&\leq c_{\Pi} h |v|_{H^1(\domain)}\quad &&\text{for all} \quad 
			v \in H^1(\domain), \quad h > 0.
		\end{aligned}
	\end{align}
	Since $\domain \subset \mathbb{R}^d$
	and each $K \in \mathcal{T}_h$ are convex, we can choose
	$c_{\Pi} = 1/\pi$  in \eqref{eq:projectionP0} \cite[Thm.\ 3.2]{Bebendorf2003}.
	The first inequality \eqref{eq:projectionP0} can be established using
	Poincar\'{e}'s inequality, for example. The second inequality in
	\eqref{eq:projectionP0} can be deduced from the first one
	or from Proposition~1.135 in \cite{Ern2004}.

	The following lemma is used to verify
	\Cref{assumption:prox,assumption:gelfand-projection}.
	
	\begin{lemma}
		\label{lem:canonicalpsih}
		Let $\beta \geq 0$
		and let $\lb$, $\ub \in H^1(\domain) \cap L^\infty(\domain)$
		satisfy $\lb \leq \ub$.
		We define
		\begin{align*}
			\lb_h  \coloneqq \Pi_h \lb \quad \text{and} \quad \ub_h \coloneqq 
			\Pi_h \ub.
		\end{align*}
		Let $\psi$ and $\psi_h$ be defined as in \eqref{eq:psis}. 
		Then
		\begin{enumerate}[nosep]
			\item for all $h > 0$, $\tau > 0$, and  $v_h \in U_h$,
			\begin{align}
				\label{eq:projprox}
				\begin{aligned}
					\|\proj{[\lb,\ub]}{v_h}-\proj{U_h \cap [\lb_h,\ub_h]}{v_h}\|_{L^2(\domain)}
					& \leq c_{\Pi}h\big(|\lb|_{H^1(\domain)} + |\ub|_{H^1(\domain)}),
					\\
					\|\prox{\psi/\tau}{v_h}-\prox{\psi_h/\tau}{v_h}\|_{L^2(\domain)}
					& \leq c_{\Pi}h\big(|\lb|_{H^1(\domain)} + |\ub|_{H^1(\domain)}).
				\end{aligned}
			\end{align}
			\item $\psi$ is proper, convex,
			closed, and $\mathrm{dom}(\psi) = [\lb,\ub]$,
			\item  for all $h > 0$  and each $v \in [\lb,\ub]$,
			$\Pi_hv \in U_h \cap [\lb_h,\ub_h]$,
			\item for all $h > 0$, $\psi_h$ is proper, convex,
			and closed, and
			$\mathrm{dom}(\psi_h) = U_h \cap [\lb_h,\ub_h]$, 
			\item if $\lb \in \mathbb{R}$ and
			$\ub \in \mathbb{R}$, then
			$\mathrm{dom}(\psi_h) \subset \mathrm{dom}(\psi)$
			and $\psi_h = \psi$ on $U_h$ for all $h > 0$, and
			\item 
			for all $u \in L^2(\domain)$ and $h > 0$,
			$\|\Pi_h u\|_{L^1(\domain)} \leq \|u\|_{L^1(\domain)} $.
		\end{enumerate}
	\end{lemma}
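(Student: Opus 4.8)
The plan is to dispatch the six claims in an order that lets the routine ones support the single delicate estimate \eqref{eq:projprox}. The workhorse is the elementwise averaging description of $\Pi_h$ recorded before the lemma: on every $K\in\mathcal{T}_h$, $\Pi_h v$ equals the mean of $v$ over $K$. From this I first record that $\Pi_h$ is \emph{monotone}, i.e.\ $a\le b$ a.e.\ implies $\Pi_h a\le\Pi_h b$, since averaging preserves inequalities elementwise. Applying monotonicity to $\lb\le\ub$ gives $\lb_h\le\ub_h$ (so the discrete box is well posed), while applying it to $\lb\le v\le\ub$ for $v\in[\lb,\ub]$ yields claim (c): $\Pi_h v$ is piecewise constant and squeezed between $\lb_h$ and $\ub_h$. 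Claim (f) is the elementwise bound $\|\Pi_h u\|_{L^1(K)}=\big|\int_K u\big|\le\|u\|_{L^1(K)}$ summed over $K$; in particular $\phi(\Pi_h u)\le\phi(u)$ for $\phi=\beta\|\cdot\|_{L^1(\domain)}$, which is exactly the hypothesis required by \Cref{thm:gapbased}.

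For claims (b), (d), (e) I would argue as follows. Since $\lb,\ub\in L^\infty(\domain)$ and $\domain$ is bounded, every $u\in[\lb,\ub]$ is essentially bounded, hence lies in $L^2(\domain)$ and satisfies $\|u\|_{L^1(\domain)}<\infty$; thus $[\lb,\ub]\subset\mathrm{dom}(\beta\|\cdot\|_{L^1(\domain)})$ and $\mathrm{dom}(\psi)=[\lb,\ub]$. Convexity is immediate (a norm plus the indicator of a convex set), and closedness follows because $u\mapsto\beta\|u\|_{L^1(\domain)}$ is $L^2(\domain)$-continuous on the bounded domain while $[\lb,\ub]$ is $L^2(\domain)$-closed (an $L^2$-convergent sequence has an a.e.\ convergent subsequence, preserving the pointwise bounds); properness follows from $\psi(\lb)<\infty$. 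Claim (d) is the same argument with $[\lb_h,\ub_h]$ intersected with the finite-dimensional, hence closed, subspace $U_h$, nonemptiness coming from $\lb_h\in U_h\cap[\lb_h,\ub_h]$. For (e), constants belong to $U_h$, so $\Pi_h\lb=\lb$ and $\Pi_h\ub=\ub$, giving $\lb_h=\lb$, $\ub_h=\ub$; then $U_h\cap[\lb_h,\ub_h]\subset[\lb,\ub]=\mathrm{dom}(\psi)$, and for $u_h\in U_h$ membership in $U_h$ is automatic, so the two indicator terms coincide and $\psi_h=\psi$ on $U_h$.

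The core is claim (a). The point is that both projections and both proximity operators decouple pointwise: the $L^2$ fidelity, the $L^1$ term, and the box constraint all act pointwise, and in the discrete case the elementwise constraint reduces to an interval constraint on the constant value of $u_h$ on each $K$. Using the elementary fact that for a strictly convex, coercive function of one real variable the minimizer over an interval is the projection of the unconstrained minimizer onto that interval, I would represent, for $v_h\in U_h$,
\[
[\prox{\psi/\tau}{v_h}](x)=\proj{[\lb(x),\ub(x)]}{w(x)}, \qquad [\prox{\psi_h/\tau}{v_h}](x)=\proj{[\lb_h(x),\ub_h(x)]}{w(x)},
\]
where $w(x)$ is the \emph{bound-independent} minimizer of $s\mapsto\tfrac12(s-v_h(x))^2+(\beta/\tau)|s|$ (a soft-thresholding of $v_h(x)$); for $\psi_h$ one checks in addition that $w$ is piecewise constant, so its clip stays in $U_h$ and genuinely solves the discrete problem. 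The two box projections in \eqref{eq:projprox} are the special case with the $L^1$ term absent, i.e.\ $w=v_h$. Claim (a) then collapses to a single pointwise Lipschitz estimate for clipping in its bounds,
\[
|\proj{[a,b]}{t}-\proj{[a',b']}{t}|\le|a-a'|+|b-b'|\qquad(a\le b,\ a'\le b'),
\]
which is immediate from $\proj{[a,b]}{t}=\max(a,\min(t,b))$ by examining the three regimes $t<a$, $a\le t\le b$, $t>b$ (the one-sided derivatives in $(a,b)$ are $(1,0)$, $(0,0)$, or $(0,1)$). Choosing $a=\lb(x)$, $b=\ub(x)$, $a'=\lb_h(x)$, $b'=\ub_h(x)$, and $t=w(x)$ (resp.\ $t=v_h(x)$), then taking the $L^2(\domain)$ norm and applying Minkowski's inequality, bounds each left-hand side of \eqref{eq:projprox} by $\|\lb-\lb_h\|_{L^2(\domain)}+\|\ub-\ub_h\|_{L^2(\domain)}$; since $\lb_h=\Pi_h\lb$ and $\ub_h=\Pi_h\ub$, the second inequality in \eqref{eq:projectionP0} bounds this by $c_{\Pi}h(|\lb|_{H^1(\domain)}+|\ub|_{H^1(\domain)})$, which is exactly \eqref{eq:projprox}. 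I expect the main obstacle to be the pointwise representation of the proximity operators—specifically, verifying that in the discrete case the pointwise clip of the soft-threshold truly minimizes over $U_h\cap[\lb_h,\ub_h]$ and not merely over the pointwise box—after which the clip-Lipschitz bound and \eqref{eq:projectionP0} finish the argument.
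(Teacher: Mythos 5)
Your proof is correct and takes essentially the same route as the paper's: the pointwise clip representations $\max\{\lb,\min\{v_h,\ub\}\}$ and $\max\{\lb_h,\min\{v_h,\ub_h\}\}$ for the two projections (with the observation that the pointwise clip of a piecewise constant function is again in $U_h$, so it solves the constrained discrete problem), the soft-thresholding substitution $v_h \coloneqq w_h-\proj{[-\beta/\tau,\beta/\tau]}{w_h} \in U_h$ reducing the prox estimate to the projection estimate, Lipschitz stability of clipping with respect to its bounds, and the projection error bound \eqref{eq:projectionP0} applied to $\lb-\Pi_h\lb$ and $\ub-\Pi_h\ub$. The only differences are cosmetic: you argue in-line what the paper delegates to citations (monotonicity of elementwise averaging for part (c) instead of Falk's lemma, the one-dimensional convex-minimization derivation of the prox formula, the elementwise $L^1$ bound for part (f)), and you use an $\ell^1$-type Lipschitz bound plus Minkowski where the paper uses a Euclidean Lipschitz constant one together with $(\varrho_1^2+\varrho_2^2)^{1/2}\le\varrho_1+\varrho_2$, yielding the identical final estimate.
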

	\begin{proof}
		\begin{enumerate}[nosep,wide]
			\item Fix $v_h \in U_h$. Since $U_h \subset L^2(\domain)$
			and $U_h$ is the space of piecewise constant functions, we have
			\begin{align*}
				\proj{[\lb,\ub]}{v_h} = \max\{\lb, \min\{v_h,\ub\}\},
				\quad \text{and} \quad 
				\proj{U_h \cap [\lb_h, \ub_h]}{v_h} = 
				\max\{\lb_h, \min\{v_h,\ub_h\}\};
			\end{align*}
			cf.\ \cite[p.\ 104]{Hinze2009}.
			For each $w \in \mathbb{R}$, the function
			$\mathbb{R}^2 \ni (t,s) \mapsto \max\{t,\min\{w,s\}\} \in \mathbb{R}$
			is Lipschitz continuous with respect to the Euclidean norm
			with Lipschitz constant one. Therefore, we obtain
			\begin{align*}
				\|\proj{[\lb,\ub]}{v_h}-\proj{U_h \cap[\lb_h,\ub_h]}{v_h}\|_{L^2(\domain)}^2
				\leq \|\lb - \Pi_h \lb\|_{L^2(\domain)}^2 +  
				\|\ub - \Pi_h\ub\|_{L^2(\domain)}^2.
			\end{align*}
			Combined with $(\varrho_1^2 + \varrho_2^2)^{1/2}
			\leq \varrho_1 + \varrho_2$ valid for all $\varrho_1$, $\varrho_2 \geq 0$
			and \eqref{eq:projectionP0}, we obtain the first assertion.

			Using \eqref{eq:psis}, we find that
			$\psi(u)/\tau  = I_{[\lb,\ub]}(u)/\tau +  (\beta/\tau) \|u\|_{L^1(\domain)}$
			for all $u \in L^2(\domain)$ and
			$\psi_h(u_h)/\tau  = I_{U_h\cap[\lb_h,\ub_h]}(u_h)/\tau + (\beta/\tau)  \|u_h\|_{L^1(\domain)}$
			for all $u_h \in U_h$.
			Combined with $U_h \subset L^2(\domain)$
			and the fact 
			that $U_h$ is the space of piecewise constant functions, we have
			for each $w_h \in U_h$,
			\begin{align*}
				\prox{\psi/\tau}{w_h}
				& = \proj{[\lb,\ub]}{w_h-\proj{[-\beta/\tau,\beta/\tau]}{w_h}},
				\\
				\prox{\psi_h/\tau}{w_h}
				&= \proj{U_h \cap [\lb_h,\ub_h]}{w_h-\proj{[-\beta/\tau,\beta/\tau]}{w_h}}
			\end{align*}
			cf.\ the computations in Lemma~C.1 in \cite{Milz2022c}.
			Defining $v_h \coloneqq w_h-\proj{[-\beta/\tau,\beta/\tau]}{w_h}$
			and observing $v_h \in U_h$, the first estimate  in 
			\eqref{eq:projprox} implies the second one.
			
			\item The set $[\lb,\ub]$ is nonempty, closed, and convex
			\cite[pp.\ 116--117]{Troeltzsch2010a}.
			Hence $I_{[\lb,\ub]}$ is proper, convex, and closed.
			H\"older's inequality and the boundedness of $\domain$ 
			ensure that $\|\cdot\|_{L^1(\domain)}$ is proper  on $L^2(\domain)$. 
			Combined with $[\lb,\ub] \subset L^2(\domain)$,
			$\mathrm{dom}(\psi) = [\lb,\ub]$.
			\item 
			The assertion follows from an application
			of Lemma~3 in \cite{Falk1973}.
			\item Using part~(c), we find that $\psi_h$ is proper.
			The remaining assertions can be established using arguments similar to
			those used in part~(b). 
			\item Using part~(e), we have
			$\mathrm{dom}(\psi_h) = U_h \cap [\lb,\ub]$.
			Combined with part~(b) and $U_h \subset L^2(\domain)$,
			we obtain $\mathrm{dom}(\psi_h) \subset \mathrm{dom}(\psi)$.
			\item This inequality follows from the projection's properties; cf.\ 
			\cite[Lem.\ 3.5]{Milz2022c}.
		\end{enumerate}
	\end{proof}

	\section{Numerical illustrations}
	
	We present numerical illustrations for 
	linear, semilinear, and bilinear
	PDE-constrained optimization.
	The problems are instances of
	\begin{align}
		\label{eq:semilinearcontrolproblem}
		\min_{u \in L^2(\domain)}\, 
		(1/2)\|S(u)-\widehat{y}\|_{L^2(\domain)}^2 
		+ \psi(u),
	\end{align}
	where $\psi$ is defined in \eqref{eq:psis},
	$\domain \coloneqq (0,1)^d$, $d \in \{1,2\}$, $\widehat{y} \in L^2(\domain)$, 
	$\beta \geq 0$, $\lb$, $\ub \in H^1(\domain) \cap L^\infty(\domain)$,	
	$V_{[\lb,\ub]}$ is an open neighborhood of $[\lb,\ub]$,
	and for each $u \in V_{[\lb,\ub]}$,
	$S(u) \in Y \coloneqq H_0^1(\domain)$ is the solution to either a 
	linear, semilinear, or bilinear PDE\@.
	We discretize the state space 
	$H_0^1(\domain)$ using 
	piecewise linear continuous finite element functions and 
	the control space $L^2(\domain)$ as described in 
	\eqref{subsect:sparsity} using the same meshes
	for both function space approximations.
	We discretize the operator equation defining the PDE solution 
	via \eqref{eq:Ehexample}.

	\subsection{Linear PDE-constrained optimization}
	\label{subsec:linearproblem}
	
	For our first test problem, 
	we consider a one dimensional computational domain,
	that is, $d=1$,  because it allows
	us perform simulations for relatively small mesh widths $h$ as compared
	to the problems with two dimensional domains
	considered in \cref{subsec:semilinearproblem,subsec:bilinearproblem}. 
	For $u \in L^2(\domain)$,
	we let $S(u) \in H_0^1(\domain)$ is the solution to the one dimensional
	boundary value problem
	\begin{align*}
		(\nabla y, \nabla v)_{L^2(\domain)} 
		= (u,v)_{L^2(\domain)}
		\quad \text{for all} \quad v \in H_0^1(\domain).
	\end{align*}
	Moreover, we define $\beta \coloneq 0.001$, $\lb(x) \coloneqq -1$,
	$\ub(x) \coloneqq 1 + (1/10)\sin(2\pi  x)$, and
	$\widehat{y}(x)  \coloneqq 100x^2$.
	
	\begin{figure}[t]
		\centering
		\subfloat{%
			\includegraphics[width=0.465\textwidth]
			{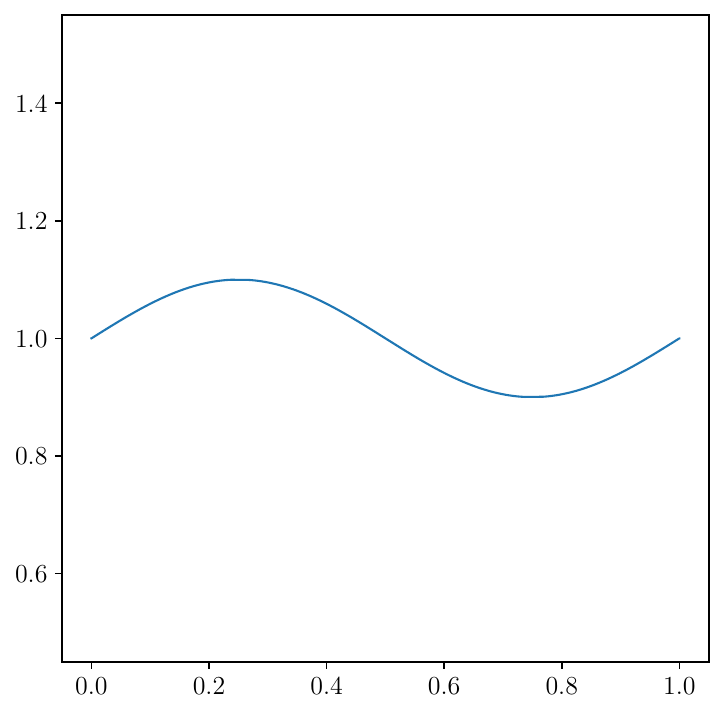}
		}
		\subfloat{%
			\includegraphics[width=0.465\textwidth]
			{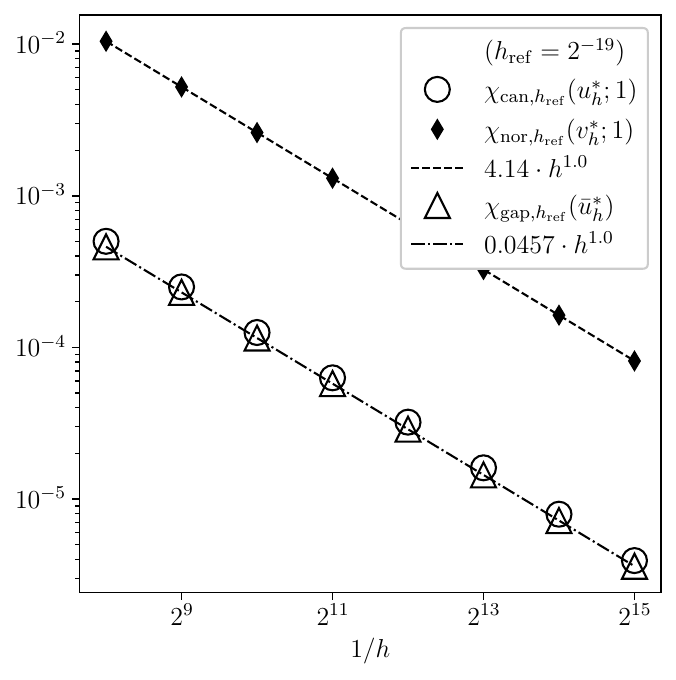}
		}
		\caption{
			Solution to the linear PDE-constrained optimization problem
			(see \cref{subsec:linearproblem})
			with mesh width parameter $h = 4096$
			\emph{(left)}, and 
			evaluations of the reference
			criticality measures 
			with reference discretization parameter
			$h_{\text{ref}} = 2^{-19}$
			\emph{(right)}.
			The controls $u_h^*$ are approximate critical points
			of the finite dimensional optimization problem corresponding
			to the discretization parameter $h> 0$,
			$v_h^*$ is computed according to \eqref{eq:vhcomputation},
			and $\bar u_h^*$ is computed according to \eqref{eq:baruhcomputation}.
			The convergence rates were computed using least squares.
		}
		\label{fig:boundaryvalueproblem}
	\end{figure}

	\subsection{Semilinear PDE-constrained optimization}
	\label{subsec:semilinearproblem}

	We choose $d = 2$.
	For semilinear PDE-constrained optimization, 
	for each $u \in L^2(\domain)$,
	$S(u) \in H_0^1(\domain)$ is the solution to the semilinear PDE
	\begin{align*}
		(\nabla y, \nabla v)_{L^2(\domain)^2} +
		(y^3, v)_{L^2(\domain)} = (u,v)_{L^2(\domain)}
		+ (g,v)_{L^2(\domain)}
		\quad \text{for all} \quad v \in H_0^1(\domain).
	\end{align*}

	We define $\lb(x) \coloneqq -10$.
	Moreover, $\ub(x) \coloneqq 0$
	if $(x_1, x_2) \in (0, 1/4) \times (0,1)$
	and
	$\ub(x) \coloneqq -5 + 20 x_1$ otherwise.
	Moreover, we 
	choose $\widehat{y}(x) \coloneqq 2\sin(4\pi x_1) \cos(8\pi x_2) \exp(2x_1)$, 
	$g(x) \coloneqq 10 \cos(8 \pi x_1)\cos(8 \pi x_2)$, and
	$\beta \coloneqq 0.0055$.
	Parts of this problem data is adapted from  \cite[Example 3]{Stadler2009}.
	
	We briefly comment on \Cref{assumption:uniformbounded}.
	Using  Assumption~(H1) and Lemma~3.8 in \cite{Casas2012}
	(see also Theorem~4.2 in \cite{Arada2002} 
	for computational domains with $C^{1,1}$ boundaries), 
	we can show that \eqref{eq:gradienterror} holds true
	with quadratic $\rho_{\nabla \widehat{J}}$.
	The Lipschitz condition in \Cref{assumption:uniformbounded}
	is implied by Theorem~10 in \cite{Hintermueller2004}.

	\subsection{Bilinear PDE-constrained optimization}
	\label{subsec:bilinearproblem}

	Deviating from \cref{subsec:semilinearproblem}, we define
	$\beta \coloneqq 0.0001$,
	$\widehat{y}(x) \coloneqq 1 + \sin(2\pi x_1) \sin(2\pi x_2)$, and
	$\lb(x) \coloneqq 0$, and consider a bilinear PDE instead of
	a semilinear PDE\@.
	According to \cite[sect.\ 7.1]{Milz2022d},
	there exists a neighborhood $V_{[\lb,\ub]}$ of $[\lb,\ub]$
	such that for each  $u \in V_{[\lb,\ub]}$,
	there exists a unique solution
	$y  = S(u)\in H_0^1(\domain)$  to the bilinear PDE
	\begin{align*}
		(\nabla y, \nabla v)_{L^2(\domain)^2} +
		(y u, v)_{L^2(\domain)} =  (g,v)_{L^2(\domain)}
		\quad \text{for all} \quad v \in H_0^1(\domain).
	\end{align*}
	
	We briefly comment on \Cref{assumption:uniformbounded}.
	Since $\|\Pi_ hv\|_{L^\infty(\domain)} \leq \|v\|_{L^\infty(\domain)}$
	for all $v \in L^\infty(\domain)$ and $h > 0$
	\cite[eq.\ (6.4)]{Hintermueller2004}, we can choose
	$W_U$ in \Cref{assumption:uniformbounded} to be a
	$L^\infty(\domain)$-bounded subset in $V_{[\lb,\ub]}$.
	For convex bounded domains $\domain \subset \mathbb{R}^2$
	with $C^{1,1}$ boundaries, Lemma~4.1 in \cite{Casas2018a} ensures
	\begin{align}
		\label{eq:bilineargradienterror}
		\|\nabla \widehat{J}_{h}(v)- \nabla\widehat{J}(v)\|_{L^\infty(\domain)}
		\leq C h,
	\end{align}
	valid for all $h > 0$ and $v \in W_U$, where $C = C(W_U) > 0$
	is a constant.  Owing to a discussion in 
	\cite[p.\ 4222]{Casas2018a}, and standard PDE regularity results, 
	we conjecture \eqref{eq:bilineargradienterror} holds true
	for $D = (0,1)^2$. In this case, \eqref{eq:bilineargradienterror}
	ensures  \eqref{eq:gradienterror}
	with linear  $\rho_{\nabla \widehat{J}}$.

	\subsection{Implementation details and numerical illustrations}
	\label{subsect:numillustrations}
	
	Our simulation environment  is built on the conditional gradient method
	\texttt{FW4PDE} \cite{Milz2024},
	\href{http://www.dolfin-adjoint.org/}{\texttt{dolfin-adjoint}} \cite{Mitusch2019,Funke2013,Farrell2013}, 
	\href{https://fenicsproject.org/}{\texttt{FEniCS}} \cite{Alnaes2015,Logg2012}, and 
	\href{https://github.com/funsim/moola}{\texttt{Moola}} 
	\cite{Nordaas2016,Schwedes2017}. 
	Given an  (approximate) critical point $u_h$
	of the finite dimensional
	problem, we evaluate the normal map-based criticality measure
	$\chi_{\text{nor}}(\cdot;\tau)$
	at 
	\begin{align}
		\label{eq:vhcomputation}
		v_h^* = u_h^*- (1/\tau) \nabla \widehat{j}_h(u_h^*);
	\end{align}
	cf.\ \Cref{lem:chinor}. We also compute
	\begin{align}
		\label{eq:baruhcomputation}
		\bar u_h^* \coloneqq \proj{\Pi_{h_{\text{ref}}}U_{\text{ad}}}{u_h^*};
	\end{align}
	cf.\ \Cref{thm:gapbased}.
	We approximate each criticality measures using the 
	finite dimensional problem's
	criticality measure with 
	$h = h_{\text{ref}}$.
	Specifically, we approximate $\chi_{\text{nor}}(\cdot;\tau)$
	via $\chi_{\text{nor},h_{\text{ref}}}(\cdot;\tau)$.
	Similar approximations are used for the canonical criticality measure
	and the gap function.
	
	Our computer code is available
	at \url{https://github.com/milzj/ErrorEstimation}.
	\Cref{fig:boundaryvalueproblem} depicts a solution
	to the linear PDE-constrained optimization problem and
	the convergence rates of  the three criticality measures
	with $\tau = 1$.
	For each criticality measure, the convergence rates are 
	close to $1$, empirically illustrating our theoretical 
	convergence rate analysis.
	\Cref{fig:solutions} depicts critical points to the semilinear and 
	bilinear PDE-constrained optimization problems.
	The convergence rates 
	for the normal map and the canonical criticality measure
	depicted in \Cref{fig:convergence-rates}
	illustrate our theoretical convergence statements. 
	The gap function-based convergence rates depicted in 
	\Cref{fig:convergence-rates} are higher than the upper
	bounds established in \cref{subsection:gapfunction}
	and those depicted in \Cref{fig:boundaryvalueproblem}.
	While in no contradiction with our theory, we believe that
	this disqualifies the gap function-based convergence rates
	to be used reliably for code verifications and validations.

	\begin{figure}[t]
		\centering
		\subfloat{%
			\includegraphics[width=0.465\textwidth]
			{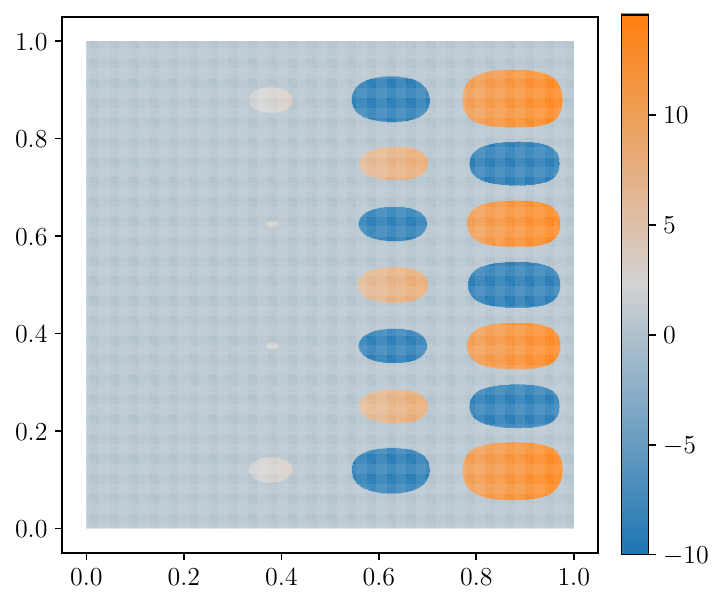}}
		\subfloat{%
			\includegraphics[width=0.465\textwidth]
			{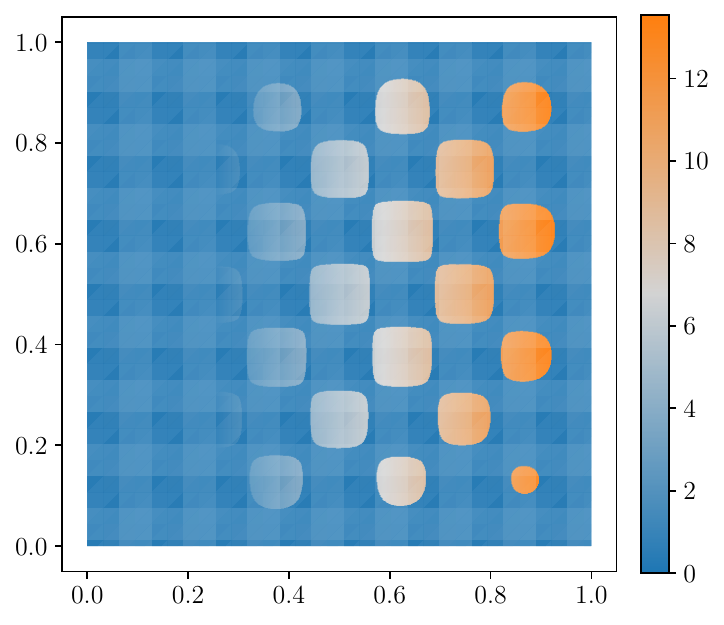}}
		\caption{
			Critical point of the semilinear PDE-constrained
			optimization problem 
			(see \cref{subsec:semilinearproblem})
			\emph{(left)} and 
			critical point of the bilinear PDE-constrained optimization problem
			(see \cref{subsec:bilinearproblem})
			\emph{(right)} with mesh discretization parameter
			$h = \sqrt{2}/512$.
		}
		\label{fig:solutions}
	\end{figure}
	
	\begin{figure}[t]
		\centering
		\subfloat{%
			\includegraphics[width=0.465\textwidth]
			{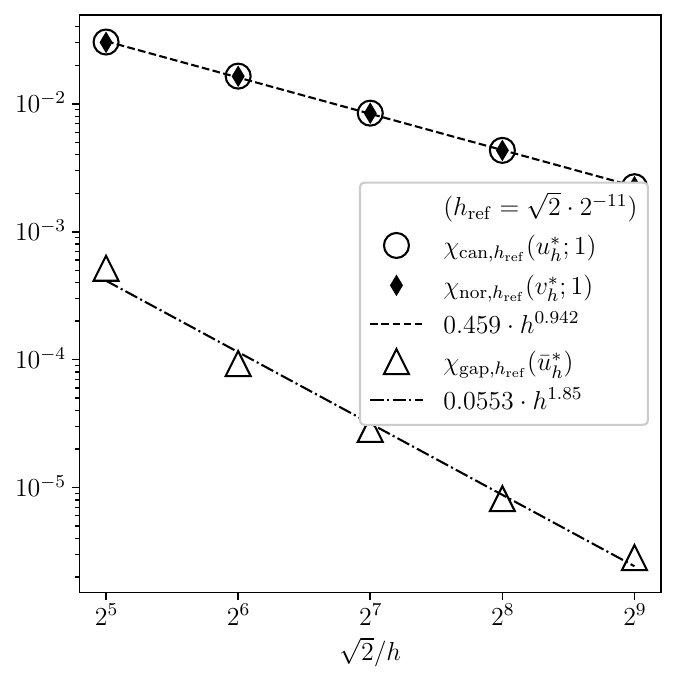}}
		\subfloat{%
			\includegraphics[width=0.465\textwidth]
			{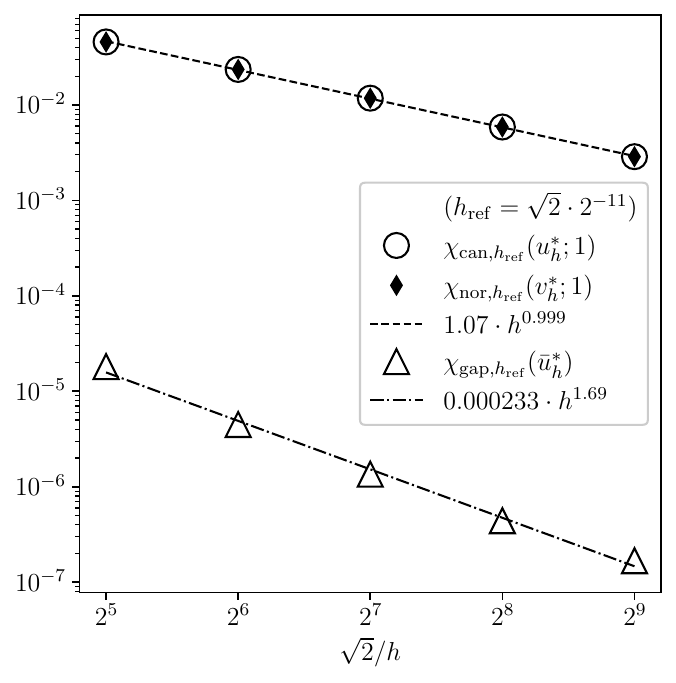}}
		\caption{%
			Evaluations of the reference criticality measures
			for the semilinear PDE-constrained optimization problem
			\emph{(left)} and the blinear PDE-constrained optimization problem
			\emph{(right)}
			with reference discretization parameter
			$h_{\text{ref}} = \sqrt{2}\cdot 2^{-11}$.
			The controls $u_h^*$ are approximate critical points
			of the finite dimensional optimization problem corresponding
			to the discretization parameter $h> 0$,
			$v_h^*$ is computed according to \eqref{eq:vhcomputation},
			and $\bar u_h^*$ is computed according to \eqref{eq:baruhcomputation}.
			The convergence rates were computed using least squares.
		}
		\label{fig:convergence-rates}
	\end{figure}

	\section{Discussion}
	
	Proposing effective approximation and discretization schemes
	and analyzing their approximation power is a fundamental
	task of utmost importance to tackle real-world decision making 
	and estimation tasks. This framework is needed in many research
	fields, such as
	science and engineering, computational optimization, and statistics.
	The ``analysis part'' essentially requires 
	two ingredients: (i)  a meaningful way of how to quantify
	accuracy of a discretization scheme and (ii) the mathematical exercise
	of deriving (order-optimal) error estimates.
	
	We proposed quantifying discretization accuracy using criteria used
	to terminate numerical solution schemes. Specifically, we focused on
	criticality measures used within termination criteria for 
	numerical methods, such as (conditional) gradient methods, 
	and semismooth Newton methods.
	We have demonstrated the optimality of our error estimates on  simple
	optimization problems. Our theoretical analysis is
	empirically validated on two nonconvex PDE-constrained optimization
	problems.

	Our error estimates are order-optimal within the problem class considered in this
	manuscript. For more restrictive problem classes we have observed
	faster convergence rates  empirically. 
	Specifically, we observed improved
	convergence rates for the canonical criticality
	measure and the unregularized gap function for optimization problems
	with \emph{simple composite functions}
	where $\psi_h \coloneqq \psi$ on $U_h$ for each $h >0$.
	This includes feasible sets given by box constraints with constant bounds
	\cite{Meyer2004}, norm balls,  
	and volume constraints \cite{Haubner2023}, for example.
	We will provide a rigorous analysis in further work.
	Our criticality measure-based error estimates will allow us to derive
	distance-based error estimates through the notion
	\emph{strong metric subregularity}
	\cite{Dontchev2009}. This will also be covered in future work.

	\begin{footnotesize}
		\bibliography{ErrorEstimates4PDE-arXiv}
	\end{footnotesize}
\end{document}